\newtheorem{Th}{Theorem}
\newtheorem{Prop}{Proposition}
\newtheorem{Cor}{Corollary}
\newtheorem{Lem}{Lemma}
\newtheorem{Def}{Definition}
\def\bt{\begin{Th}}
\def\et{\end{Th}}
\def\bp{\begin{Prop}}
\def\ep{\end{Prop}}
\def\bc{\begin{Cor}}
\def\ec{\end{Cor}}
\def\bl{\begin{Lem}}
\def\el{\end{Lem}}
\def\bd{\begin{Def}}
\def\ed{\end{Def}}
\def\be{\begin{equation}}
\def\ee{\end{equation}}
\newcommand{\ot}{\otimes}
\newcommand{\op}{\oplus}
\newcommand{\we}{\wedge}
\newcommand{\om}{\omega}
\newcommand{\p}{\partial}
\newcommand{\R}{\mathbb{R}}
\newcommand{\C}{\mathbb{C}}
\newcommand{\g}{\mathfrak{g}}
\newcommand{\gp}{\mathfrak{p}}
\newcommand{\gs}{\mathfrak{s}}
\newcommand{\so}{\mathfrak{so}}
\newcommand{\gso}{\mathfrak{so}}
\newcommand{\gsu}{\mathfrak{su}}
\newcommand{\gsl}{\mathfrak{sl}}
\newcommand{\gu}{\mathfrak u}
\newcommand{\tr}{\operatorname{tr}}
\newcommand{\Inv}{\operatorname{Inv}}
\newcommand{\ad}{\operatorname{ad}}
\newcommand{\im}{\operatorname{im}}
\newcommand{\G}{\mathcal{G}}
\newcommand{\Aut}{\operatorname{Aut}}
\newcommand{\Hom}{\operatorname{Hom}}
\newcommand{\gr}{\operatorname{gr}}
\newtheorem{thm}{Theorem}
\newtheorem*{thm*}{Theorem}
\theoremstyle{definition}
\newtheorem{dfn}{Definition}
\theoremstyle{remark}
\newtheorem{rem}{Remark}
\begin{document}
%draft: \today

\title{Constant Curvature Models in Sub-Riemannian Geometry}
\author{D.\ Alekseevsky, A.\  Medvedev, J.\ Slovak}
\thanks{All authors were partially supported by the project CZ.1.07/2.3.00/20.0003 of the
Operational Programme Education for Competitiveness of the Ministry
of Education, Youth and Sports of the Czech Republic; J.S. acknowledges the
support by the GACR, grant GA17-01171S}
\dedicatory{This work is dedicated to Valentin Lychagin}
\address{D.A.: Institute for Information Transmission Problems, Bolshoy
Karetny per. 19, build.1, Moscow 127051, Russia, and Masaryk University,
Department of Mathematics and Statistics, Kotlarska 2, 611 37 Brno, Czech
Republic}
\address{A.M.: SISSA, via Bonomea 265, Trieste, Italy, and Masaryk University,
Department of Mathematics and Statistics, Kotlarska 2, 611 37 Brno, Czech
Republic;}
\address{J.S.:Masaryk University,
Department of Mathematics and Statistics, Kotlarska 2, 611 37 Brno, Czech
Republic}
\begin{abstract}Each sub-Riemannian geometry with bracket generating distribution
enjoys a background structure determined by the distribution itself. At the
same time, those geometries with constant sub-Riemannian symbols determine a
unique Cartan
connection leading to their principal invariants. We provide cohomological
description of the structure of these curvature invariants in the cases
where the background structure is one of the parabolic geometries. As an
illustration, constant curvature models are discussed for certain
sub-Riemannian geometries.
\end{abstract}

\maketitle

%\date{today}
%\tableofcontents

\section{Introduction}
  
The   central   objects of  Riemannian geometry    are  Levi-Civita
    connection and associated geodesics and curvature.  There are three ways
    to define geodesic equation and study geodesics of a Riemannian
    manifold:
  
In  the  Hamiltonian  approach,   geodesic  equation    is  the  Hamiltonian
   equation in the cotangent bundle $T^*M$ with Hamiltonian $H(p) = \frac12
   g^{-1}(p,p)$ and geodesics are projections of the integral curves of the
   Hamiltonian flow to $M$.
   
In  the  Lagrangian  variational  approach, the  geodesic  equation  is
    the Euler-Lagrange equation for the length or energy functional in the
    space of curves.
   
In  the  geodesic     approach by Levi-Civita, geodesics   are  defined  as
autoparallel curves $\gamma(t)$, such that the tangent field
$\dot{\gamma}(t)$
   is parallel  with  respect  to Levi-Civita connection  (unique
torsionfree connection which preserves the metric).

Existence of  such connection follows from the fact that Riemannian metric,
considered as $SO(n)$-structure (i.e., a principal $ SO(n)$-bundle $ P \to M
= P/SO(n)$ of frames) has trivial fist prolongation.  In other words, the
bundle $P $ has a canonical $SO(n)$-equivariant trivialisation
        $\omega :  TP \to  \mathfrak{so}(n) +  \mathbb{R}^n  $ (Cartan
connection).  Projection to $ M$ of integral curves of “constant“ vector
fields $\omega^{-1} v,\, v \in \mathbb{R}^n$ are geodesics.
  
In  sub-Riemannian    geometry, which  studies   a  manifold  $M$   with  a
 sub-Riemannian metric $g$ defined on a non holonomic distribution $D$, all
 three approaches are working, but, as it was noted by A.  M.  Vershik and
 L.D.  Faddeev \cite{VF10}, they lead to different equations for
"sub-Riemannian geodesics".  As they remark, in Riemannian geometry a
geodesic may be defined as (locally) shortest curve and as a straightest
  curve,  but  in   sub-Riemannain  geometry    these  two   notions
   are not equivalent.  The definition of a sub-Riemannian geodesic as a
   shortest curve is based on the notion of Carnot-Carateodory metric
   $d(x,y)$ on a sub-Riemannian manifold $(M,D,g)$, defined as the infimum
   of length of horizontal curves, joining points $x,y \in M$.  It is used in
   optimisation control theory.  A.  Agrachev et al.,  using a power series
   decomposition of the square of the distance function, define
     the  curvature    of  sub-Riemannian metric   and use  it   
for  infinitesimal  variation  of  geodesics.

     In  non-holonomic mechanics, the  geodesics are  most important  as
      straightest curves.  The evolution of mechanical system with
      non-holonomic constrains is described by geodesic equation
      $\nabla_{\dot{\gamma}}\dot{\gamma} =0 $ for horizontal curve
      $\gamma(t)$, where $\nabla : \Gamma(D) \times \Gamma(D) \to
      \Gamma(G)$ is a partial connection, defined by the Koszul formula on a
      sub-Riemannian manifold $(M,D,g)$ with a fixed complementary to $D$
      distribution, see \cite{VF10}, \cite{DG}.  Moreover, V.V. Wagner
      proposed an extension of the sub-Riemannian metric to Riemannian one,
      by choosing some auxillary complementary distributions and use it to
      define the curvature for the partial connection such that its
      vanishing is equivalent to the flatness of the partial connection
      $\nabla$.
          
Summarizing, there are diverse approaches applying  different  types of  
connections  and curvature concepts in sub-Riemannian geometry. 
Depending on the problem one could
\begin{itemize}
\item construct a partial connection \cite{CCh09};
\item use the notion of a connection over a bundle map \cite{Lang01,Lang03};
\item extend sub-Riemannian metric to Riemannian metric \cite{FG95,DV10,Hl12};
\item use a very general variational approach to the curvature \cite{ABR}.
\end{itemize}
The goal of this paper is to provide an efficient framework to tackle
different equivalence problems in sub-Riemannian geometry.  We use
Cartan-Tanaka theory to construct a canonical Cartan connection and study
the structure of the curvature tensor.  The primary application we had in
mind during the work on the paper was the study of constant curvature
sub-Riemannian spaces.  They possess the biggest possible symmetry algebra
among the structures with the same metric symbol, see Section
\ref{S:constCurv}.  We classify constant curvature sub-Riemannian structures
in 2 particularly interesting cases: contact distributions and free 2-step
distributions.

Consider a distribution $D$ on the manifold $M$.
The sheaf $\mathcal D^{-1}=\mathcal D$
of vector fields with all values in $D$ generates the filtration by sheaves
$$
\mathcal D^j= \{[X,Y], X\in \mathcal D^{j+1}, Y\in \mathcal D^{-1}\}, \quad
j=-2,-3,\dots
.$$
We say that $D$ is a bracket generating distribution if for some $k$,
$\mathcal D^{-k}$ is the sheaf of all vector fields on $M$. In particular, there is
the corresponding filtration of subspaces $T_xM=D^{-k}_x\supset \dots \supset
D^{-1}_x$ at each $x\in M$ and the associated graded tangent space
$$
\operatorname{gr}(T_xM) = T_xM/D^{-k+1}_x\oplus \dots \oplus D^{-1}_x
$$
comes equipped with the structure of a nilpotent Lie
algebra.

A sub-Riemannian structure on $M$ is given by a metric $S$ which is defined only along $D$. We say that $(M, D, S)$ is a sub-Riemannian geometry with {\em  constant metric symbol} if $D$ is bracket generating, and the nilpotent algebra
$\operatorname{gr}T_xM$, together with the metric, is isomorphic to a fixed
graded Lie algebra
$$
\g_-=\g_{-k}\oplus \dots \oplus\g_{-1}
$$
with a fixed metric $\sigma$ on $\g_{-1}$.

In the sequel we shall deal with sub-Riemannian geometries with constant
metric symbols only. Under this assumption, we can employ the Cartan-Tanaka theory. We define
$$
\g_0\subset \mathfrak{so}(\g_{-1})
$$
to be the Lie algebra of the Lie group $G_0$ of all
automorphisms of the graded nilpotent algebra $\g_-$ preserving the metric
$\sigma$ on $\g_{-1}$, i.e. the algebra of certain derivations on $\g_-$.
The action of the derivations extends the Lie algebra structure on $\g_-$ to
the Lie algebra
\begin{equation}\label{E:g}
\g= \g_{-k}\oplus \dots \oplus \g_{-1}\oplus \g_0
.
\end{equation}

The graded bundle $\gr(TM)$ admits the canonical graded frame bundle
with structure group $\Aut_{\gr}(\g_-)$. A sub-Riemannian structure is a particular type of a \emph{filtered $G_0$-structure} that is a reduction of
the canonical graded frame bundle to the group~$G_0$ (\cite{Cap17}). In Section \ref{S:normCartConn} we show that the bundle $\G$ of orthogonal sub-Riemannian frames admits a natural normal Cartan connection. In our particular example a \emph{Cartan connection} is a form $\omega\colon T\G\to\g$ satisfying following properties:
\begin{itemize}
\item ${\omega}_p\colon T_p\mathcal{G} \to \mathfrak{g}$ is an isomorphism for all $p\in\mathcal{G}$;
\item $R_g^* (\omega) = \operatorname{Ad}_{g^{-1}}(\omega)$ where $R_g$ is a principal right action of an element $g\in G_0$;
\item  $\omega(\zeta_X)=X$ where $\zeta_X$ is the fundamental vector field corresponding to $X \in \g_0$.
\end{itemize}
The \emph{curvature form} $\Omega$ of the Cartan connection $\omega$ is a two-form defined by
\[ \Omega(\eta,\xi) = d\omega(\eta,\xi)+[\omega(\eta),\omega(\xi)],\quad \eta,\xi\in{\mathfrak
X}(\G). \]
It follows from the definition of a Cartan connection that $\Omega$ is equivariant for the principal action, meaning $R_g^* (\Omega) = \operatorname{Ad}_{g^{-1}}(\Omega)$, and horizontal, i.e.  $\Omega(\zeta_X,\xi)=0$ for arbitrary $X \in \g_0$. Therefore it is convenient to consider \emph{curvature function} $\kappa \colon \mathcal{G} \to
\Hom(\wedge^2
(\g/\g_0),\mathfrak{g})$ defined by the formula
\begin{equation*}
 \kappa(X,Y) = \Omega ( \omega^{-1}(X),
\omega^{-1}(Y)), \quad X,Y\in\g/\g_0.
\end{equation*}

It immediately follows that  $\kappa$ is equivariant function on $\G$. Under
reasonable normalization of the curvature, the
whole curvature function could be expressed through its essential part, the so
called harmonic curvature $\kappa_H\colon\G\to H^2(\g_-,\g)$, see Section
\ref{S:normCartConn} for the definition.  Here $H^2(\g_-,\g)$ is the second
Lie algebra cohomology space of $\g_-$ with values in $\g$.  Computation of
$H^2(\g_-,\g)$ is an essential step in understanding the structure of the
curvature function.

The paper is organized as follows. In Section \ref{S:normCartConn} we gather
observations which explain why any sub-Riemannian structure with a constant
metric symbol possesses a natural normal Cartan connection.  All facts listed
there are known \cite{Mor08,Yat88} and we summarize them for the convenience
of the reader.

Section \ref{S:cohom} is purely algebraic and shows how $H^2(\g_-,\g)$ could
be computed utilizing the information about cohomologies related to the
underlying distribution.  The results obtained in this section are of
general nature.  They may be applied for description of harmonic curvature
of a Cartan connection under the assumption that the Tanaka prolongation of
the non-positively graded Lie algebra $\g = \g_- \op \g_0$ is trivial.  To
give an example, according to the result by Yatsui~\cite{Yat15} and
independently by Cowling-Ottazzi~\cite{CO15}
the Tanaka prolongation of $\g$ for a sub-conformal structure
is either trivial, or $\g_-$ is the
nilpotent Iwasawa component of a real rank $1$ simple Lie algebra.

In Sections \ref{S:contact} and \ref{S:free} we restrict our attention to
contact sub-Riemannian structures and free 2-step distributions
respectively.  These two examples could be seen as a generalization of
the extensively studied 3-dimensional contact case~\cite{AB12,Bar13,FG95}.  We
compute cohomologies which reveal the structure of the harmonic curvature
function.  Then we show how to compute the harmonic curvature explicitly.

In the last Section \ref{S:constCurv} we illustrate how the algebraic
information about harmonic curvature could be used for the classification of
constant curvature spaces in sub-Riemannian geometry.  The advantage of our
method is that all computations are purely algebraic and reduce to the
basic representation theory of semisimple Lie algebras.  We provide the
classification for contact case and free 2-step case in
Theorem~\ref{T:constCurvContact} and Theorem~\ref{T:constCurvFree}
respectively.

For the convenience of the readers who are not familiar with the Cartan
connections, the appendix explains
a straightforward construction of the
normalized connections in the case of trivial prolongations of $\mathfrak
g$,
under the additional assumption of a fixed complement $D'$ to $D$ in $TM$.

\section{Normal Cartan connections associated with sub-Riemannian structures}\label{S:normCartConn}

%% NEW %%
% explanation how Tanaka prolongation works added
Let us first remind the Tanaka prolongation procedure. Given a
non-positively graded Lie algebra $\g=\g_{-k}\oplus \dots\oplus
\g_{-1}\oplus \g_0$ with the entire negative
part $\g_-$ generated by $\g_{-1}$, we inductively define the vector
spaces
$$
\g_{r+1} = \{A\in \mathfrak{gl}_{r+1}(\g_{\le r});\
A([X,Y])=[A(X),Y]+[X,A(Y)],\ X,Y\in \g_-\}
,$$
i.e. we aim at the space of homogenous derivations of degree $r+1$ on
the previously defined algebra $\g_{\le r}$.
Notice the formula within the latter definition
extends the Lie bracket to a bracket on $\g_{\le r+1}$
whenever at least one of the arguments is from $\g_-$.

It turns out that if we obtain the trivial vector space
$\g_r=0$ for some $r$, then all the subsequent spaces $\g_s$, $s>r$, will be
trivial, too. We might also proceed to $r=\infty$ with all $\g_r\ne0$ and in
both cases, we call the resulting space $\hat g=\bigoplus_{r=-k}^\infty \g_r$
the Tanaka prolongation of $\g$. Finally, the brackets constructed on
the prolongation $\hat \g$ can be completed naturally to obtain the Lie algebra structure on
$\hat g$ by requesting (again inductively)
$$
[A,B](X) = [A(X),B] + [ A, B(X)]
.$$

Notice, we might also start the above inductive prolongation procedure with
$\g_-$ and $\g_0$ is then the entire algebra of graded derivations of
homogeneity zero. We call the resulting prolongation the full Tanaka
prolongation. Shrinking $\g_0$ in the full prolongation
to a smaller algebra $\tilde\g_0$ ensures often
the finiteness of the new Tanaka prolongation of $\g_-\oplus \tilde\g_0$.

Let us come back to the graded algebra $\g$ in \eqref{E:g}, i.e. $\g_0$ is
a subalgebra in $\mathfrak{so}(\g_-)$.
We are going to show that its Tanaka prolongation is trivial.
%% END of NEW %%

Corollary 2 of theorem 11.1 in \cite{Tan70} reveals that the
Tanaka prolongation of $\g$ must be finite (indeed Tanaka shows that the
finiteness can be reduced to the classical answer for $G$-structures when
restricting to the $\g_0$ action on $\g_{-1}$ and this is clearly of finite type
here).

Thus we may consider this prolongation $\hat \g = \g_{-k}\oplus\dots\oplus\g_{-1}\oplus
\g_0\oplus \g_1\oplus\cdots$ and
assume that $\g_1 \neq 0$.  Denote by  $\tilde\g_{-1}$ a
$\g_0$-invariant complement to  the  centralizer of  $\g_{1}$
in $\g_{-1}$. The proposition 2.5 of \cite{Yat88} implies that
the graded subalgebra $\mathfrak s\subset \hat\g$
generated by $\tilde\g_{-1}\oplus
[\tilde\g_{-1},\g_1]\oplus\g_1$ is  semisimple. By the general theory of graded
semisimple Lie algebras, there should be a grading
element $E \in \gs_0$ for $\mathfrak s$ which is
impossible   since  $\g_0$ is compact. Thus  $\g_1 =0$   and   $\hat\g=\g$.

Let us observe that the metric $\sigma$ extends canonically to the entire tensor
algebra generated by $\g_{-1}$. Since the action of $\g_0$ on the tensor
algebra is reductive, we may identify the entire $\g_-$ with an invariant
component in the tensor algebra and thus the metric $\sigma$ uniquely
extends to the entire $\g_-$ making the individual components mutually
orthogonal. This observation allows us to set
suitable normalization conditions on the curvature of a Cartan connection of
type $(\g_{-},G_0)$, where $G_0$ is a Lie group with Lie algebra $\g_0$ such
that the inclusion of $\g_0$ into the derivations of $\g_-$ integrates to a
group homomorphism $G_0\to \operatorname{Aut}_{\gr}(\g_-)$.

The Cartan connection of such a type is an affine connection on $M$
whose Cartan curvature function
can be viewed as a function on the frame bundle valued in cochains
$C^2(\g_-,\g)$ with the obvious differential
$\partial$. Now, the adjoint mapping $\partial^*$, provides the
complementary space $\operatorname{ker}\partial^*$ to the image
$\operatorname{im}\partial$. Thus the classical theory suggests the
condition $\partial^*\kappa=0$ as the right normalization for the Cartan
connection.
Additionally, the entire curvature $\kappa$ decomposes into its
homogeneous components with respect to the grading of $\g$.

This normalization satisfies the so called Condition (C) formulated
in \cite[Definition 3.10.1, p. 338]{Mor93} as a sufficient
algebraic condition for the existence of a unique Cartan connection
with a normalized curvature for geometries on filtered manifolds.
A more straightforward explanation of the construction of normalized Cartan connections has been recently published in \cite{Cap17} and  \cite{AD15}.
See also \cite{CS09} for the background on Cartan connections.

The \emph{ harmonic part} $\kappa_H$ of the
curvature is the part lying in the kernel of $\partial$. It is annihilated by both $\partial$ and $\partial^*$. Due to the algebraic Hodge theory the kernel of
the algebraic
Laplace operator $\Delta = \partial\circ\partial^* +
\partial^*\circ\partial$ corresponds to $H^2(\g_-,\g).$ As a result harmonic curvature function takes values in $H^2(\g_-,\g).$ The Bianchi identity expresses the individual homogeneity components of $\partial\kappa$ by algebraic expressions
in terms of lower homogeneities components and their derivatives. In particular, the entire curvature $\kappa$ vanishes if and only if its harmonic part is zero. We arrive at the following theorem (\cite[Theorem 1]{Mor08}
or \cite[Example 3.4 and Theorem 4.8]{Cap17}):

\begin{thm} For each sub-Riemannian manifold $(M,D,S)$ with constant metric symbol, there
is a unique Cartan connection $(\mathcal G\to M, \om)$ of type $(\g, G_0)$
with the curvature function
$\kappa:\mathcal G\to \g\otimes\Lambda^2\g_-^*$ satisfying
$\partial^*\kappa=0$. Via the Bianchi identities, the entire
curvature is obtained from its harmonic projection $\kappa_H$, i.e. the
component with $\partial \kappa_H=0$.
\end{thm}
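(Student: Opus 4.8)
The statement is an application of the general existence-and-uniqueness machinery for normal Cartan connections on filtered manifolds, so my plan is to verify that the present sub-Riemannian setting meets all the hypotheses of that machinery and then to extract the Bianchi recursion. I would begin by exhibiting the principal bundle. Since $(M,D,S)$ has constant metric symbol, the graded isomorphisms $\gr(T_xM)\to\g_-$ carrying the induced metric to $\sigma$ form a principal $G_0$-bundle $\G\to M$, a reduction of the canonical graded frame bundle to $G_0$; this is exactly the filtered $G_0$-structure referred to in the introduction, and it is the bundle on which the desired $\om$ will live.

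Next I would record the two algebraic inputs already established above. First, the Tanaka prolongation of $\g=\g_-\oplus\g_0$ is trivial: the argument given shows $\g_1=0$, since a nonzero $\g_1$ would force a semisimple subalgebra $\gs$ carrying a grading element $E\in\gs_0$, contradicting compactness of $\g_0$; hence $\hat\g=\g$ and no genuine prolongation of the frame bundle is needed. Second, because $\g_0$ is compact, hence reductive, the metric $\sigma$ extends $G_0$-invariantly to all of $\g_-$ with the graded pieces mutually orthogonal. This furnishes a $G_0$-invariant inner product on the cochain spaces $C^\bullet(\g_-,\g)=\Lambda^\bullet\g_-^*\otimes\g$ and lets me define the Kostant-type codifferential $\partial^*$ adjoint to the Chevalley--Eilenberg differential $\partial$.

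With these in hand, the core step is to confirm that $\partial^*\kappa=0$ is an admissible normalization, i.e. Morimoto's Condition (C). The point is that $\partial^*$ is $G_0$-equivariant and, by Hodge theory for $\Delta=\partial\partial^*+\partial^*\partial$, the kernel $\ker\partial^*$ is a $G_0$-invariant complement to $\im\partial$ in each homogeneous degree of $C^2(\g_-,\g)$; I would check that this complement is compatible with the filtration by homogeneity so that the inductive normalization applies degree by degree. Because the prolongation is trivial, the procedure terminates with $\G$ itself as the Cartan bundle, producing a unique $\om\colon T\G\to\g$ of type $(\g,G_0)$ with $\partial^*\kappa=0$; invoking the cited existence-uniqueness theorem packages this conclusion.

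Finally, for the reconstruction of $\kappa$ from $\kappa_H$ I would combine the orthogonal Hodge decomposition $C^2=\ker\Delta\oplus\im\partial\oplus\im\partial^*$ with the differential Bianchi identity. Since $\partial^*\kappa=0$ and $\ker\partial^*=\ker\Delta\oplus\im\partial^*$, I write the normalized curvature as $\kappa=\kappa_H+\nu$ with $\kappa_H\in\ker\Delta$ harmonic and $\nu\in\im\partial^*$; then $\partial\kappa=\partial\nu$ because $\partial\kappa_H=0$, while the Bianchi identity expresses $\partial\kappa$ in each homogeneity through strictly lower-homogeneity components and their derivatives. As $\partial$ restricts to an isomorphism $\im\partial^*\to\im\partial$, the lowest nonharmonic homogeneity of $\nu$ is pinned down by $\kappa_H$, and an induction on homogeneity recovers the whole of $\kappa$; in particular $\kappa=0$ iff $\kappa_H=0$. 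The main obstacle I anticipate is not any single computation but the bookkeeping of Condition (C) outside the parabolic world: here $\g$ is neither semisimple nor equipped with a positive part, so I must ensure that the codifferential and the harmonic splitting behave exactly as in the parabolic case, which is precisely what reductivity of the compact $\g_0$ guarantees.
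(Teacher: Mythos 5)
Your proposal follows essentially the same route as the paper: triviality of the Tanaka prolongation of $\g$ (forced by compactness of $\g_0$), the $G_0$-invariant extension of $\sigma$ to all of $\g_-$ yielding the adjoint codifferential $\partial^*$ and the invariant complement $\ker\partial^*$ to $\im\partial$, verification that this normalization meets Morimoto's Condition (C) so the general existence-uniqueness machinery (Morimoto 2008, \v Cap 2017) applies, and finally the Hodge decomposition together with the Bianchi identities to recover $\kappa$ from $\kappa_H$. The only difference is one of detail: you spell out the homogeneity induction (writing $\kappa=\kappa_H+\nu$ with $\nu\in\im\partial^*$ and using injectivity of $\partial$ there), where the paper simply asserts this reconstruction and cites the references.
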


The distribution $D$ on $M$ itself often defines a nice finite type filtered
geometry which enjoys a canonical Cartan connection, too.
Many of them belong to the
class of the parabolic geometries, for which the full Tanaka prolongation of
$\g_-$ is a semisimple Lie algebra $\bar\g$ and
$\g_-=\g_{-k}\oplus\dots\oplus\g_{-1}$ is the opposite
nilpotent radical to the parabolic subalgebra
$\mathfrak p=\bar\g_0\oplus\dots \oplus\bar\g_k\subset
\bar\g$, with $\g_0\subset \bar \g_0$.

Let us fix a graded semisimple Lie algebra $\bar\g$  and consider the graded
frame bundle $\mathcal G_0\to M$ of $\operatorname{gr}TM$.
Under some mild cohomological conditions which are listed in
Section \ref{S:cohom}, the structure group of this bundle is the
full group of graded automorphisms of $\g_-$.
%However, we shall stress that we as well consider cases where $\bar\g_0$ is not necessary equal to $\Aut_{\gr}(\g_-)$ like in the contact case.
Exactly as in the
sub-Riemannian case, the algebraic Hodge theory and the corresponding
normalization are available (though the theory behind is more complicated).
In particular, the codifferential $\partial^*$ is constructed by means of
the Killing form of $\bar\g$.

Again, the harmonic part of the curvature correspond to the
components isomorphic to the %cohomology $H^2(\g_-,\g)$ or
$H^2(\g_-,\bar\g)$
and they can be computed equivalently either by means of $\partial$ or
$\partial^*$.

See \cite{CS09} for
detailed background on the parabolic geometries and the following theorem on
canonical Cartan connections with structure group $P$,
where $P$ is a suitable Lie group with the Lie algebra $\mathfrak p$.

\begin{thm}  Consider a bracket generating distribution $D$ on $M$ with the constant
symbol equal to the negative part of a graded semisimple Lie algebra $\bar\g$
and the corresponding frame bundle $\mathcal G_0\to M$ of
$\operatorname{gr}TM$. Then there
is a unique Cartan connection $(\bar{\mathcal G}\to M, \om)$ of type
$(\bar\g, P)$ with the curvature function
$\bar\kappa:\bar{\mathcal G}\to \bar\g\otimes\Lambda^2\g_-^*$ satisfying
$\partial^*\bar\kappa=0$. Via the Bianchi identities, the entire
curvature is obtained from its harmonic projection $\bar\kappa_H$, i.e. the
component with $\partial \bar\kappa_H=0$.
\end{thm}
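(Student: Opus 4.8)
The plan is to reduce the theorem to the established general theory of regular normal parabolic geometries, since the setup is precisely that of a parabolic geometry of type $(\bar\g, P)$. First I would verify that the data are exactly those required by the general existence-and-uniqueness theorem for canonical Cartan connections on filtered manifolds: the distribution $D$ has constant symbol equal to $\g_-$, which is the negative part of the $|k|$-graded semisimple Lie algebra $\bar\g$, and $P$ is the parabolic subgroup with Lie algebra $\mathfrak p=\bar\g_0\oplus\dots\oplus\bar\g_k$. The frame bundle $\mathcal G_0\to M$ is a regular infinitesimal flag structure of type $(\bar\g,P)$, and the key structural input is that the full Tanaka prolongation of $\g_-$ recovers $\bar\g$, so the prolongation procedure terminates with the correct algebra and no extra components appear.

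Next I would invoke the algebraic Hodge theory on the cochain spaces $C^\bullet(\g_-,\bar\g)$. The Killing form on $\bar\g$ together with the chosen metric identifies $\bar\g\cong\bar\g^*$ in a $\g_0$-equivariant way, which lets us define the codifferential $\partial^*$ as the adjoint of the Lie algebra differential $\partial$. The decomposition
\[
C^2(\g_-,\bar\g)=\operatorname{im}\partial\oplus\ker\Delta\oplus\operatorname{im}\partial^*
\]
into $\partial$-exact, harmonic, and $\partial^*$-coexact parts provides the normalization target: requiring $\partial^*\bar\kappa=0$ selects a canonical complement to $\operatorname{im}\partial$. The crucial algebraic fact, already available from the discussion preceding the statement, is that this normalization satisfies Morimoto's Condition (C), which is the sufficient condition guaranteeing both existence and uniqueness of the normalized connection. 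I would therefore check that Condition (C) holds in this parabolic setting, either directly or by citing the semisimplicity of $\bar\g$ and the standard Kostant-type Hodge decomposition.

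The existence-and-uniqueness statement then follows by applying the general construction of normalized Cartan connections for filtered geometries of finite type to $(\bar\g,P)$: one prolongs the $\mathcal G_0$-structure step by step, and at each homogeneity the normalization $\partial^*\bar\kappa=0$ removes exactly the ambiguity in the choice of connection, yielding a unique $\bar\omega$ on a uniquely determined bundle $\bar{\mathcal G}\to M$. Finally, to obtain the last sentence, I would use the Bianchi identity $\partial\bar\kappa=\{$lower-homogeneity terms and their covariant derivatives$\}$: since the harmonic part $\bar\kappa_H$ is characterized by $\partial\bar\kappa_H=0$ and the normalization forces $\partial^*\bar\kappa=0$, an inductive argument on homogeneity degree shows that every nonharmonic component of $\bar\kappa$ is determined algebraically by lower-degree data, so the whole curvature is recovered from $\bar\kappa_H$.

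The main obstacle I anticipate is not the formal reduction but the verification that the Hodge-theoretic normalization behaves as cleanly in this filtered (non-$|1|$-graded) setting as it does in the classical case; specifically, confirming that $\partial^*$ is well defined and that $\ker\partial^*$ is genuinely complementary to $\operatorname{im}\partial$ requires the $\g_0$-invariance of the inner product on all cochains and the compatibility of $\partial^*$ with the grading. Because the paper has already recorded that this normalization satisfies Condition (C) and cites the parabolic-geometry literature for the detailed construction, the cleanest route is to state these verifications and then cite \cite{CS09} together with \cite{Mor08} and \cite{Cap17} for the rigorous prolongation argument, rather than reproduce it.
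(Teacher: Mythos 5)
Your proposal is correct and matches the paper's treatment: the paper offers no proof of its own for this statement, but simply invokes the standard existence-and-uniqueness theory for regular normal parabolic geometries and cites \cite{CS09}, exactly the reduction you describe. The only point worth tightening is that in the parabolic setting the codifferential is the Kostant codifferential built purely from the Killing form pairing of $\g_-$ with $\mathfrak p_+$ (no auxiliary metric is involved), which is precisely why, as the paper notes, it differs from the metric-adapted $\partial^*$ used for the sub-Riemannian normalization.
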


Thus, starting with a parabolic geometry equipped additionally with the
metric on the generating distribution $D$, there are the two curvatures
$\kappa_H$ and $\bar\kappa_H$ there. The goal of this papers is to find
relations between their algebraic properties.

As we shall see, the link
between these two curvatures is quite tight from this point of view.
This is due to the fact that in
the setup of the latter two theorems, the cohomologies can be computed with
respect to the standard Lie algebra cohomology differential
$\partial:C^p(\g_-,W)\to C^{p+1}(\g_-,W)$, where the $\g_-$-module $W$ is
either $\g$ or $\bar\g$. The same cohomologies can be equally
computed by the means of the adjoint codifferential $\partial^*$, but here
the codifferentials are much more different in general. Thus also the
normalizations of the curvatures $\kappa$ and $\bar\kappa$ are quite different
and we cannot expect simple explicit links between the
general values of $\kappa_H$
and $\bar\kappa_H$, without having the metric particularly well adjusted.

At the same time, clearly equivalence of sub-Riemannian structures must imply
the equivalence of the distributions.

\section{Cohomologies related to sub-Riemannian structures}\label{S:cohom}

Consider a non-positively graded Lie algebra $\g=\g_-\op\g_0$ such that the Tanaka prolongation of $\g$ is trivial.
Let $\bar{\g}$ be a graded Lie algebra such that $\bar\g_-=\g_-$ and $\g_0\subset\bar\g_0$.
%We shall see soon there is a strong relation between $H^2_{\ge1}(\g_-,\g)$ and  $H^2_{\ge1}(\g_-,\bar\g)$.

One particular choice of $\bar\g$ we should keep in mind is the full Tanaka prolongation of $\g_-$ when it is finite dimensional. For a sub-Riemannian structure $(M,D,S)$ this means that the distribution $D$ defines a finite type filtered
geometry. In the case when $\bar\g$ is semisimple the geometry admits a canonical Cartan connection. This is exactly the case for 2-step free distributions which we consider in Section \ref{S:free}.

If the full Tanaka prolongation of $\g_-$ is infinite dimensional then the choice
of Lie algebra $\bar\g$ is not so obvious.  Sub-Riemannian contact
structures $(M^{2n+1},D,S)$ which we consider in Section \ref{S:contact} are
particular examples.  We shall see that a sub-Riemannian contact structure
with a constant metric symbol defines naturally a Levi-definite CR-structure
on the manifold $M^{2n+1}$.  Such CR-structures admit a canonical Cartan
connection with values in $\gsu(n+1,1)$ and we can consider
$\bar\g=\gsu(n+1,1).$

As we have seen in the previous section, it is natural to expect
some strong relation between $H^2_{>0}(\g_-,\g)$ and
$H^2_{>0}(\g_-,\bar\g)$.

\begin{thm}\label{T:3}
Let $\bar\g=\bar\g_-\op\bar\g_0\op\bar\g_+$ be a graded Lie algebra and
$\g=\g_-\op\g_0$ be a non-positively graded Lie algebra such that $\g_-=\bar
\g_-$ and $\g_0\subset\bar\g_0$.  Assume that the Tanaka prolongation of
$\g$ is trivial, i.e.  $H^1_{\ge1}(\g_-,\g)=0$.  The cohomology
$H^2_{>0}(\g_-,\g)$ as a $\g_0$-submodule is isomorphic to a direct sum of 2
parts:
	\begin{enumerate}
		\item $H^1_{>0}(\g_-,\bar\g/\g)/H^1_{>0}(\g_-,\bar\g)$,
		\\
		\item $\ker \pi_2 \subset H^2_{>0}(\g_-,\bar\g),$
	\end{enumerate}
where $\pi_2\colon H^2_{>0}(\g_-,\bar\g)\to H^2_{>0}(\g_-,\bar\g/ \g)$ is the natural projection induced by the projection in cochains  $\pi\colon C^2_{>0}(\g_-,\bar\g)\to C^2_{>0}(\g_-,\bar\g/ \g).$
\end{thm}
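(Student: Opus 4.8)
The plan is to derive everything from the long exact cohomology sequence attached to the short exact sequence of $\g_-$-modules
\[
0 \to \g \xrightarrow{\iota} \bar\g \xrightarrow{\pi} \bar\g/\g \to 0 .
\]
First I would check that this genuinely is a sequence of graded $\g_-$-modules, equivariant for the $\g_0$-action. Since $\g_-=\bar\g_-$ and $\g_0\subseteq\bar\g_0$, the bracket relations of the graded algebra $\bar\g$ give $[\g_-,\g_-]\subseteq\g_-$ and $[\g_-,\g_0]\subseteq\bar\g_-=\g_-$, so $\g$ is a graded $\g_-$-submodule of $\bar\g$ under the adjoint action; the quotient $\bar\g/\g$ then inherits a graded $\g_-$-module structure, and both $\iota$ and $\pi$ are $\g_0$-equivariant homogeneous maps of degree $0$.

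Next I apply the Chevalley--Eilenberg functor $H^\bullet(\g_-,-)$. Because the differential $\partial$ is homogeneous of degree $0$ and all three modules are graded, the resulting long exact sequence
\[
\cdots \to H^1(\g_-,\bar\g) \xrightarrow{\pi_*} H^1(\g_-,\bar\g/\g) \xrightarrow{\delta} H^2(\g_-,\g) \xrightarrow{\iota_*} H^2(\g_-,\bar\g) \xrightarrow{\pi_*} H^2(\g_-,\bar\g/\g) \to \cdots
\]
decomposes into its homogeneous components, so it restricts to an exact sequence of the positive-degree parts $H^\bullet_{>0}$; moreover all arrows, including the connecting map $\delta$ (which is built from $\partial$ via the snake lemma), are $\g_0$-module homomorphisms, since the snake-lemma construction is natural and $\g_0$ acts on the whole diagram.

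Now I extract the two summands by exactness at $H^2_{>0}(\g_-,\g)$. On the one hand $\im\delta=\ker\iota_*$ and $\ker\delta=\im\pi_*$, so $\delta$ induces an isomorphism $H^1_{>0}(\g_-,\bar\g/\g)/\im\pi_* \cong \ker\iota_*$, which is part (1). On the other hand $\im\iota_*=\ker\bigl(\pi_*\colon H^2_{>0}(\g_-,\bar\g)\to H^2_{>0}(\g_-,\bar\g/\g)\bigr)$, and $\pi_*$ on $H^2_{>0}$ is precisely the map $\pi$ of the statement, so $\im\iota_*=\ker\pi$, which is part (2). Assembling these yields the short exact sequence of $\g_0$-modules
\[
0 \to H^1_{>0}(\g_-,\bar\g/\g)/\im\pi_* \to H^2_{>0}(\g_-,\g) \to \ker\pi \to 0 .
\]
To upgrade this to the asserted direct sum I would invoke reductivity: in the cases at hand $\g_0$ is compact (it sits inside $\so(\g_{-1})$), hence every finite-dimensional $\g_0$-module is completely reducible and the sequence splits $\g_0$-equivariantly.

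The step I expect to require the most care is bookkeeping rather than hard analysis: confirming that $\delta$ and the quotient in part (1) are honestly $\g_0$-equivariant and degree-preserving, and matching the cokernel $H^1_{>0}(\g_-,\bar\g/\g)/\im\pi_*$ with the quotient $H^1_{>0}(\g_-,\bar\g/\g)/H^1_{>0}(\g_-,\bar\g)$ written in the statement. The latter notation is literal only if $\pi_*$ is injective on the positive part, equivalently if $\iota_*\colon H^1_{>0}(\g_-,\g)\to H^1_{>0}(\g_-,\bar\g)$ vanishes; I would verify this last point separately from the same long exact sequence, so that identifying $\im\pi_*$ with $H^1_{>0}(\g_-,\bar\g)$ is justified.
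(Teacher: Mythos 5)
Your proposal follows essentially the same route as the paper: the long exact cohomology sequence of $0\to\g\to\bar\g\to\bar\g/\g\to 0$, restricted to positive homogeneities, with part (1) realized as the image of the connecting homomorphism $\delta$ and part (2) as $\im i_2=\ker\pi_2$ (your equivariant splitting via complete reducibility of $\g_0$-modules is a detail the paper leaves implicit). The one point you leave open---injectivity of $H^1_{>0}(\g_-,\bar\g)\to H^1_{>0}(\g_-,\bar\g/\g)$, needed to read the quotient in (1) literally---cannot be extracted from the long exact sequence alone; the paper settles it by its standing hypothesis (stated in the section preamble rather than in the theorem) that the Tanaka prolongation of $\g$ is trivial, which gives $H^1_{>0}(\g_-,\g)=0$.
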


We refer the two components of the harmonic curvature above as the
$H^1$-part and the $H^2$-part respectively.

\begin{proof}
Let $W$ be a $\g_-$-submodule of $\g_-$-module $V$. Equivalently
we can consider the following short exact sequence:
\begin{center}
	\begin{tikzcd}
		0 \arrow{r} & W  \arrow{r} & V  \arrow{r} & V/W
		\arrow{r} & 0	.
	\end{tikzcd}
\end{center}
It induces the short exact sequence of differential complexes
\[
\begin{tikzcd}
0 \arrow{r} & C^\bullet(\g_-,W)  \arrow{r}{i} &
C^\bullet(\g_-,V)
\arrow{r}{\pi}
&C^\bullet(\g_-,V/W )  \arrow{r} & 0	
\end{tikzcd}
\]
and the long exact sequence in cohomologies
\begin{equation}\label{eq1}{\small
\begin{tikzcd}
\arrow{r} & H^n(\g_-,W)  \arrow{r}{ i} & H^n(\g_-,V)
\arrow{r}{ \pi}
& H^n(\g_-,V/W)  \ar[out=0, in=180, looseness=1.25,
overlay,swap]{dll}{\delta}
\\	
& H^{n+1}(\g_-,W)  \arrow{r}{i} & H^{n+1}(\g_-,V)
\arrow{r}{\pi}
& H^{n+1}(\g_-,V/W )  \arrow{r} & { }
\end{tikzcd}}
\end{equation}
%\begin{center}
%	\begin{tikzpicture}[node distance = 8em, auto]
%	\node [block] (D1) {distribution $D$};
%	\node [block, right of=D1] (D2) {Derived flag $D^\bullet$};
%	\node [block, right of=D1] (D2) {Derived flag $D^\bullet$};
%	\path [line] (D1) -- (D2);
%	\end{tikzpicture}
%\end{center}
We are going to apply the long exact sequence \eqref{eq1} to the pair
$\g\subset\bar \g$.

Let us notice that the gradings on $\g$ and $\bar\g$ induce the
gradings on the corresponding spaces of chains, and since the differential
$\partial$ respects this grading, we get grading on the cohomology spaces,
too. Moreover, we may consider the sequences
\eqref{eq1} for the individual homogeneities separately.

There is the general algebraic fact that
$\g=\g_-\oplus\g_0\oplus\dots\oplus\g_\ell$ is the Tanaka prolongation of
$\g_-\oplus\g_0$ if and only if $H^1_{>0}(\g_-,\g)$=0, see e.g.
\cite[Proposition 2.12]{Cap17}. Thus, in our case
the second and the third rows of the long exact
sequence
\eqref{eq1} are
%(remind the positive homogeneity of $H^1(\mathfrak
%g_-,\mathfrak g)$ vanishes due to the triviality of the prolongation of
%$\mathfrak g$)
\begin{equation}\label{eq2}{\small
\begin{tikzcd}
\arrow{r} & H^1_{>0}(\g_-,\g)=0    \arrow{r}   & H^1_{>0}(\g_-,\bar\g)
\arrow{r}{\pi_1}
& H^1_{>0}(\g_-,\bar\g / \g )  \ar[out=0, in=180, looseness=1.25,
overlay,swap]{dll}{\delta}
\\	
& H^2_{>0}(\g_-,\g)  \arrow{r}{i_2} & H^2_{>0}(\g_-,\bar\g)
\arrow{r}{\pi_2}
& H^2_{>0}(\g_-,\bar\g / \g )  \arrow{r} & { }
\end{tikzcd}}
\end{equation}
It is not hard to compute the connecting homomorphism $\delta$. Let
$\alpha\in
C^n(\g_-,\bar\g)$ such that $\pi(\alpha)$ is a representative of $h\in
H^n(\g_-,\bar\g/\g).$ This in particular means that $\partial(\alpha) \equiv 0\mod\g$. Then $\delta(h)\in H^{n+1}(\g_-,\g)$ is
represented by the
$\partial\alpha\in C^{n+1}(\g_-,\g)$.

From the exact sequence \eqref{eq2} one can see that $H^2_{>0}(\g_-,\g)$
consists of
two parts. The first part is $H^1_{>0}(\g_-,\bar\g/\g)/H^1_{>0}(\g_-,\bar\g)$, which is mapped
by
$\delta$
injectively into $H^2_{>0}(\g_-,\g)$. The second part is $\im i_2\colon
H^2_{>0}(\g_-,\g)\to H^2_{>0}(\g_-,\bar\g).$ Exactness of the sequence
implies
$\im i_2 =
\ker \pi_2$ which consists of such elements from  $ H^2_{>0}(\g_-,\bar\g)$
whose cohomology class allows a representative in $C^2_{>0}(\g_-,\g)$.
\end{proof}

%% NEW WORDING %%
Let us now explain how the claim of the latter theorem simplifies when
dealing with the case of semisimple algebras $\bar\g$, i.e., in the case of
sub-Riemannian parabolic geometries. We present two general comments first and
then we conclude this section with computation of the low homogeneities
in $H^1_{>0}(\g_-,\bar \g/\g)$.

%\section*{Appendix. Some facts about cohomology of a semisimple Lie algebra}
%Two remarks concerning the case when $\bar\g$ is semisimple Lie algebra are due before we start the proof.

\begin{rem}\label{rem1} Let us start with the $H^1$-part of the
curvature in the theorem \ref{T:3}.
Consider a semisimple graded Lie algebra
$\bar\g=\g_-\oplus\bar\g_0 \oplus \bar\g_1\oplus\dots \oplus\bar\g_k$. As we
noticed in the proof above, the
first cohomology $H^1_{\ge0}(\g_-,\bar\g)$ is answering the question whether
the spaces $\bar \g_r$ are those appearing in the full Tanaka prolongation of
$\bar\g_-$.

In particular, there are just two exceptions where
$H^1_{>0}(\g_-,\bar\g)\ne0$ (we use the standard notation for the classification of
parabolic subalgebras in semisimple algebras  via the choice of some of the
positive simple roots $\Delta^+$, cf. \cite{CS09}):
\begin{itemize}
\item
the projective geometries which are of type $A_l$ with gradings corresponding to $\{\alpha_1\}\subset\Delta^+$;
\item the projective
contact geometries which are of type $C_l$, $l\ge 3$,
with gradings corresponding to $\{\alpha_1\}\subset\Delta^+$.
\end{itemize}
Thus we see that for semisimple $\bar\g$ in almost all cases the
$H^1$-part of $H^2_{>0}(\g_-,\g)$ is equivalent to $H^1_{>0}(\g_-,\bar\g/\g)$.

Next, $\bar\g_0$ equals to all derivations on the graded
algebra $\g_-$, and consequently $\bar\g$ is the full prolongation of
$\g_-$, if and only if all the zero homogeneity cohomology
$H^1_{0}(\g_-,\bar\g)$ vanishes. In all such cases, the distribution $D$
itself completely determines the parabolic geometry in question and, thus,
the canonical Cartan connection, too.

If $H^1_0(\g_-,\bar\g)\ne0$, then we need
further reduction of the algebra of all derivations to $\bar\g_0$ in order
to get $\bar \g$ as the Tanaka prolongation of $\g_-\oplus \bar\g_0$.
This is the case for the following geometries only:
\begin{itemize}
\item
the length of the grading is $k=1$;
\item
in all the contact cases (we remind that every complex
simple Lie algebra admit a unique contact grading);
\item $\bar{\mathfrak g}$ is of type $A_l$ with $l\ge3$ and the
grading corresponds to $\{\alpha_1,\alpha_i\}\subset\Delta^+$, $2\le i\le l$, where $\Delta^+$ is the the set of simple roots;
\item
$\bar{\mathfrak g}$ is of type $C_l$ with $l\ge2$ and the
grading corresponds to $\{\alpha_1,\alpha_l\}\subset\Delta^+$,
\end{itemize}
see
\cite[Proposition 4.3.1, p. 426]{CS09} or \cite[Proposition 5.1, p.
473]{Yam93}). \end{rem}

\begin{rem} We come to the $H^2$-part of the curvature in the theorem
\ref{T:3}. The projection $\pi_2$ is zero whenever the cochains
representing the cohomology are valued in $\g$. In such a case, the
$H^2$-part coincides with the entire cohomology $H^2(\g_-,\bar\g)$.

Actually, the structure of $H^2(\g_-,\bar\g)$ for semisimple graded $\bar\g$ is quite well known
and positive homogeneities in the curvature are rather exceptional. A full
list of them can be found in \cite[Proposition 5.5, p. 477]{Yam93}. Moreover,
only very few of those in the list allow for curvature components valued in
$\bar\g_{\ge0}$. Except for several gradings of length $k=1$ and some of
the contact cases, there are
just six such choices for $\bar\g$:
\begin{itemize}
\item
the path geometries with $\operatorname{dim}D\ge 3$ (type $A_l,$ grading is given by $\{\alpha_1,\alpha_2\}\subset\Delta^+,$  $l\ge3$);
\item
the real forms corresponding to quaternionic contact geometries (type $C_l,$ grading is given by $\{\alpha_2\}\subset\Delta^+,$  $l\ge3$);
\item
the Borel $B_2$ case;
\item
the lowest dimensional free 2-step distribution
$\operatorname{dim}D=3$, $\operatorname{dim}M=6$ (type $B_3,$ grading is given by $\{\alpha_3\}\subset\Delta^+,$ where $\alpha_3$ is the short simple root);
\item
one more $C_l$ case corresponding to the first two simple roots $\{\alpha_1,
\alpha_2\}\subset\Delta^+$, i.e. in
the complex version this geometry includes the common correspondence space for the
projective contact and the quaternionic contact geometries.
%({\color{red} Jan, could you please specify the type here?}).
\item
$\bar\g$ is of the type $G_2$ and the grading corresponds to $\{\alpha_1\}$,
i.e., the well known free distribution with growth vector $(2,3,5)$.
\end{itemize}
While these six types of geometries deserve further exploration,
in all other cases, the kernel of $\pi_2$ is the entire $H^2_{>0}(\g_-,\bar\g)$.
\end{rem}

We proceed with the computation of lower homogeneities in $H^1_1(\g_-,\bar\g/\g)$.  We shall denote
$\bar\g^i=\bar\g_-\oplus\bar\g_0\oplus\bar \g_1\oplus\dots\oplus\bar\g_i$, $i\ge 0$. All of them are $\g_-$-submodules in $\bar\g$. For the next two Theorems we don't require that the Tanaka prolongation of $\g$ is trivial.

\begin{thm}\label{T:cohom_deg1}
Let $\bar\g=\bar\g_-\op\bar\g_0\op\bar\g_+$ be a
graded Lie algebra. Let $\g=\g_-\op\g_0$
be a non-positively graded Lie algebra with $\g_-=\bar \g_-$ and
$\g_0\subset\bar\g_0$. Then
\be\label{eq:hom1}
H^1_1(\g_-,\bar\g/\g)=\Hom(\g_{-1},\bar\g^0/\g)/\partial(\bar\g_1+\g).
\ee

If $\bar\g$ is a
semisimple graded Lie algebra with the grading element $E$ and $\g_0\subset\bar\g_0^{ss}$, where $\bar\g_0^{ss}$ is a semisimple part of $\bar\g_0$ then
\[H^1_1(\g_-,\bar\g/\g)=\Hom(\g_{-1}, \bar\g_0/(\g_0\op\R E) ).\]
\end{thm}
\begin{proof}
Applying the long exact sequence  \eqref{eq1} to the pair $\bar\g^{0}/\g\subset \bar\g/\g$ we obtain:
\[
   \begin{tikzcd}
   & \dots\arrow{r}
   & \bar\g^1/\bar\g^0=H^0_{1}(\g_-,\bar\g/\bar\g^{0} )
   \ar[out=0, in=180, looseness=1.25,
   overlay,swap]{dll}{\delta}
   \\
   H^1_{1}(\g_-,\bar\g^0/\g) \arrow{r} &
   H^1_{1}(\g_-,\bar\g/\g) \arrow{r} &
   H^1_{1}(\g_-,\bar\g/\bar\g^0) =0.
   \end{tikzcd}
\]
In the formula above $H^1_{1}(\g_-,\bar\g^0/\g)=\Hom(\g_{-1},\bar\g^0/\g)$ and $\delta:\bar\g^1/\bar\g^0 \to H^1_{1}(\g_-,\bar\g^0/\g)$ is induced by $\partial$ in $C(\g_-,\bar\g/\g)$. Since $\partial(\bar\g^0)=0$ and $\partial(\bar\g_1)\subset \Hom(\g_{-1},\bar\g^0/\g)$ in $C(\g_-,\bar\g/\g)$ we get that  \[\delta(\bar\g^1/\bar\g^0)=\partial(\bar\g_1+\bar\g^0)=\partial(\bar\g_1+\g)\]  and the formula \eqref{eq:hom1} holds.

Assume now that  $\bar\g$ is semisimple with the grading element $E$ and $\g_0\subset\bar\g_0^{ss}$. For any $A\in \g_0^{ss}$ we have \[\tr\left((\ad E \ad A)|_{\bar\g_i}\right)=i\cdot\tr\left(\ad A|_{\bar\g_i}\right)=0.\]
If we define by $W$ the orthogonal complement to $E$ in $\bar\g_0 $ with respect to the Killing form then it follows that $\g_0^{ss}\subset W$. Since $\g\subset\g_0^{ss}\subset W$ the space $\Hom(\g_{-1},\bar\g^0/\g)$ can be decomposed as
\[
\Hom(\g_{-1},\R E+\g) + \Hom(\g_{-1},W/\g)
\]
Let $B$ be the Killing form of $\bar\g$. For every $v\in \bar\g_{1}$, $w\in \bar\g_{-1}$ we have
\[ B([v,w],E)=B(v,[w,E])=B(v,w), \]
since $[w,E]=-[E,w]=w$. Therefore $\partial(v)(w)=B(v,w)E \mod{W}.$

We see that the projection of $\partial(\bar \g_1+\g)$ onto
$\Hom(\g_{-1},\R E+\g)$ is bijective. Therefore
\[
H^1_1(\g_-,\bar\g/\g)
=
\Hom(\g_{-1},\bar\g^0/\g)/\Hom(\g_{-1},\R E+\g)
=
\Hom(\g_{-1}, \bar\g_0/(\g_0\op\R E) )
\]
\end{proof}

\begin{thm}\label{T:cohom_deg2}
Let $\bar\g=\bar\g_-\op\bar\g_0\op\bar\g_+$ be a
graded Lie algebra. Let $\g=\g_-\op\g_0$
be a non-positively graded Lie algebra with $\g_-=\bar \g_-$ and
$\g_0\subset\bar\g_0$. Then
\begin{enumerate}
\item\label{en1}
For $ i > 0 $ we have
\[
H^1_{i+1}(\g_-,\bar\g/\g)
=
\ker\delta:H^1_{i+1}(\g_-,\bar\g/\bar\g^{i-1} )
\to H^2_{i+1}(\g_-,\bar\g^{i-1}/\g)
\]
where $\delta$ is induced by $\partial$ in $C^\bullet_{i+1}(\g_-,\bar\g^{i-1}/\g)$
and
\[
H^1_{i+1}(\g_-,\bar\g/\bar\g^{i-1})
=
\Hom(\g_{-1},\bar\g^i/\bar\g^{i-1})/
\partial(\bar\g_{i+1}+\bar\g^{i-1}).
\]
\item\label{en3}
If $k$ is the highest homogeneity in $\bar\g$ then $H^1_{> k+1}(\g_-,\bar \g/\g)=0$.
\item\label{en2}
If $\bar\g$ is a
semisimple graded Lie algebra with the grading element $E$ and $\g_0\subset\bar\g_0^{ss}$, where $\bar\g_0^{ss}$ is a semisimple part of $\bar\g_0$ then in homogeneity 2 we have
\[
H^1_2(\g_-,\bar\g / \g)\subseteq\operatorname{Sym}(\g_{-1},\bar\g_1)
\]
where $\operatorname{Sym}(\g_{-1},\bar\g_1)\subset \Hom(\g_{-1},\bar\g_1)$
denotes the subspace of
symmetric tensors with respect to the Killing form of $\bar\g$.

Moreover if $\bar\g_0=\g_0\op\R E$ or $\partial(\g_{-2}^*)=\we^2 \g_{-1}^* $ then
\[ H^1_2(\g_-,\bar\g / \g)=\operatorname{Sym}(\g_{-1},\bar\g_1).\]
\end{enumerate}
\end{thm}
\begin{proof}
Applying the long exact sequence  \eqref{eq1} to  $\bar\g^{i-1}/\g\subset \bar\g/\g$ we obtain:
\be\label{es}
   \begin{tikzcd}
   H^1_{i+1}(\g_-,\bar \g^{i-1}/\g)\arrow{r} &
   H^1_{i+1}(\g_-,\bar \g/\g)  \arrow{r} &
  H^1_{i+1}(\g_-,\bar\g/\bar\g^{i-1} )
   \ar[out=0, in=180, looseness=1.25,
   overlay,swap]{dll}{\delta}
   \\
    H^2_{i+1}(\g_-,\bar\g^{i-1}/\g) \arrow{r} & \dots
    \end{tikzcd}
\ee

We show first that $H^1_{i+1}(\g_-,\bar \g^{i-1}/\g)=0$. Let $\alpha\in C^1_{i+1}(\g_-,\bar \g^{i-1}/\g)$. Such $\alpha$ has a form $\sum_{t,j}\omega^{-t}_j\ot v^j_{i+1-t}$ where $\omega^{-t}_j\in\g_{-t}^*$ and $v^j_{i+1-t}\in\bar\g_{i+1-t}$. Denote by $t_0$ the minimal $t$ such that $v^j_{i+1-t}\neq 0$ for some $j$. Due to degree $t_0\ge2$ and therefore
\[ \partial(\alpha) =\sum_j\partial(\omega^{-t_0}_j)\ot v^j_{i+1-t_0} \mod{(\bar\g^{i-t_0}/\g)} \]
can not be equal to zero since $\partial(\omega^{-t_0}_j)\neq 0$.

The computation of $H^1_{i+1}(\g_-,\bar\g/\bar\g^{i-1})$ is straightforward. Due to degree the only elements in $C^1_{i+1}(\g_-,\bar\g/\bar\g^{i-1})$ are $\Hom(\g_{-1},\bar\g^i/\bar\g^{i-1})$. All of them are obviously closed. By factoring out $\partial(\bar\g_{i+1}+\bar\g^{i-1})$ we obtain that
\[ H^1_{i+1}(\g_-,\bar\g/\bar\g^{i-1})=\Hom(\g_{-1},\bar\g^i/\bar\g^{i-1})/\partial(\bar\g_{i+1}+\bar\g^{i-1}). \]
To finish the proof of \eqref{en1} it remains to apply exact sequence \eqref{es} keeping in mind that $H^1_{i+1}(\g_-,\bar \g^{i-1}/\g)=0$. The second statement of the theorem immediately follows from the first one.

We proceed with the proof of \eqref{en2}. According to \eqref{en1} the space $H^1_{2}(\g_-,\bar\g/\g)$ is equal to the kernel of
\[
\delta:
H^1_{2}(\g_-,\bar\g/\bar\g^{0} )
\to
H^2_{2}(\g_-,\bar\g^{0}/\g)
=
H^2_{2}(\g_-)\ot (\bar\g^{0}/ \g)
\]
where
\[
H^1_{2}(\g_-,\bar\g/\bar\g^{0})
=
\Hom(\g_{-1},\bar\g^1/\bar\g^{0})/\partial(\bar\g^{2}+\bar\g^{0}).\]

Let $W$ be the orthogonal complement to $E$ in $\bar\g_0 $ with respect to the Killing form $B.$ As in the proof of Theorem \ref{T:cohom_deg1} we have the canonical splitting
\be\label{e:c2}
C^2_{2}(\g_-,\bar\g^{0}/ \g)
=
\Hom(\we^2\g_{-1}, \R E+\g)
+
\Hom(\we^2\g_{-1},W/\g ).
\ee
which induces the canonical splitting in cohomologies
\be\label{e:cc2}
H^2_{2}(\g_-,\bar\g^{0}/ \g)
=
(\we^2\g_{-1}^*/\partial\g_{-2}^*)\ot ( \R E+\g)
+
(\we^2\g_{-1}^*/\partial\g_{-2}^*)\ot (W/\g ).
\ee

Consider the map
\[
\pi\circ \partial\colon
\Hom(\g_{-1},\bar \g^{1}/\g)
\to
\Hom(\we^2\g_{-1}, \R E+\g),
\]
where $\pi$ is defined by the splitting \eqref{e:c2}. The kernel of $\pi\circ \partial$ is exactly $\operatorname{Sym}(\g_{-1},\bar\g_1).$ Moreover the image of the projection of $\partial(\bar\g_2)$ onto $\Hom(\g_{-1},\bar \g_{1})$ is mapped invectively to $\partial(\g^*_{-2})\ot E.$ Therefore the induced map
\[
\pi\circ\delta\colon
\Hom(\g_{-1},\bar\g^1/\bar\g^{0})/\partial(\bar\g^{2}+\bar\g^{0})
\to
\left(\we^2\g_{-1}^*/\partial(\bar\g_{-2}^*)  \right)\ot  ( \R E+\g).
\]
has $\operatorname{Sym}(\g_{-1},\bar\g_1)$ as its kernel. As a result $H^1_2(\g_-,\bar\g / \g)\subseteq\operatorname{Sym}(\g_{-1},\bar\g_1)$.
Finally, if $\bar\g_0=\g_0\op\R E$ or $\partial(\g_{-2}^*)=\we^2 \g_{-1}^* $ the second component in \eqref{e:cc2} vanishes. This ensures that in these particular cases
\[
H^1_2(\g_-,\bar\g / \g)= \ker \pi\circ\delta
=
\operatorname{Sym}(\g_{-1},\bar\g_1).
\]
\end{proof}

\section{Contact sub-Riemannian structures}\label{S:contact}

Let $(M, H, g)$ be a  contact sub-Riemannian manifold of
dimension $2n+1$. An arbitrary contact   form $\theta$ defines
a non-degenerate symplectic form $\omega = d \theta|_{H}$ on $H$.
Both $\theta$ and $\omega$ are defined  up to a conformal factor.

The  symbol  Lie  algebra  $\g_-$  of   the   contact   structure $H$
(that is the graded tangent  space
\[\gr(T_xM) = H_x\op(T_xM/H_x)\]
with  the induced  Lie  bracket)  is
isomorphic to the Heisenberg  Lie  algebra
\[\mathfrak{g}_- = \g_{-2}\op \g_{-1}   =  \mathbb{R}e_0 + \mathbb{R}^{2n}  \]
with  the   non-trivial  Lie  bracket $[u,v] = \omega(u,v)e_0,\,\,  u,v \in
\g_{-1}$.
Metric symbols of sub-Riemannian contact structures
(i.e. the  symbol  algebra   $\g_- = \g_{-2} + \g_{-1}$ together   with   a  metric  $g$  on  $\g_{-1}$)
is  parametrized by skew-symmetric non-degenerate
endomorphisms  $I_g = g^{-1} \circ \omega$ of $\g_{-1}$  defined  up
to  rescaling. With  respect  to  the  $g$-orthogonal Fitting decomposition
$\g_{-1} =  \g_1^{\lambda_1} \oplus \cdots \oplus \g_1^{\lambda_k}$ ,  the  endomorphism  can be  written  as
$$  I_g = \lambda_1 J_1 \oplus \cdots \oplus \lambda_k J_k $$
where   $J_j$ is a  complex  structure  in  the  (even-dimensional) vector
 space $\g_1^{\lambda_j}$ and $\lambda_1 < \lambda_2 < \cdots < \lambda_k$.
 We may rescale $I_g$ such that $\lambda_k =1$.  Such $I_g$ is canonically
 associated with the sub-Riemannian structure $(H,g)$ and define the complex
 structure $ J = J_1 \oplus \cdots \oplus J_k $ in $ \g_1 =
 \g_{-1}^{\lambda_1} \oplus \cdots \oplus \g_{-1}^{\lambda_k}$.

The algebra of  infinitesimal  automorphisms of   the   metric  symbol  algebra   $(\g_- = \g_{-2}+ \g_{-1},g)$  is
the   direct  sum  of   the unitary  algebras
$$   \mathfrak{aut}(\g_-,g)) = \mathfrak{u}(\g_{-1}^{\lambda_1})
\oplus \cdots \oplus \mathfrak{u} (\g_{-1}^{\lambda_k}) =
\mathfrak{u}(n_1) \oplus \cdots \oplus  \mathfrak{u}(n_k) $$
where $\dim \g_{-1}^{\lambda_j} = 2 n_j$.

Since  we  assume  that   the  metric  symbol  is   constant, the   sub-Riemannian  structure induces a $g$-orthogonal  decomposition
 $$  H =  V_1 \oplus \cdots \oplus V_k$$
of  the  contact  distribution    into  a  sum of   $I_g = g^{-1}\circ \omega$-invariant  distributions   s.t.    $I_g |_{V_j} = \lambda_j J_{V_j}$   and  determines  a complex  structure
$J = J_{V_1 } \oplus \cdots  \oplus J_{V_k}$  on  the  contact  distribution  $H$, i.e.  an  almost  CR  structure.

The symbol  algebra of almost  CR  contact  structure $(H,J)$ is well known:
\[\g_-  + \bar\g_0 = \g_{-2} + \g_{-1} + (\mathfrak{u}(n) + \mathbb{R}E),\]
where  $E$ is the grading element.
It is well know that  the  full prolongation    of  this   Lie  algebra   is
\[\bar{\g}=\mathfrak{su}(n+1,1)   = \g_{-2} + \g_{-1} + \bar\g_0 + \bar\g_1 + \bar\g_2.\]
In matrix  notations
\begin{equation}
\bar\g=\gsu(n+1,1)=
\label{mat} \left\{
\begin{pmatrix}
\lambda+i\mu & -y^* & i\beta
\\
x & A-\frac{2i}n \mu I_n & y
\\
i\alpha & -x^* & -\lambda+i\mu
\end{pmatrix}
\right\},
\end{equation}
where  $A\in\gsu(n)$, $x,y\in V=\R^{2n}=\C^n$, $\alpha,\beta,\lambda,\mu\in\R$. The entry $i\alpha$ has the homogeneity $-2,$ $x$ has the homogeneity
$-1,$ $\lambda$, $\mu$ and $A$ have the homogeneity zero, $y$ has the homogeneity one, and $i\beta$ has the homogeneity two.\\
 Now  we  describe  the  cohomology $H^2(\g_-,\bar \g)$ and  relate it  with  the  cohomology $H^2(\g_-, \g)$  associated  with   contact sub-Riemannian  structure   with  maximally  symmetric   metric  symbol $\g$ that is in  the  case  when $I_g =J$    and
  $\g = \g_- + \mathfrak{u}(n)$.\\

The harmonic curvature $H^2(\g_-,\bar \g)$ has two components, of homogeneity 1 and 2. If $n > 1$ then the complexification of  homogeneity one component gives two conjugate complex representations. They correspond to bilinear maps $\Lambda^2 V^{1,0}\to V^{0,1} $ and $\Lambda^2 V^{0,1}\to V^{1,0} $. This torsion is
the obstruction to the integrability of the $CR$-structure and it is proportional to the Nijenhuis tensor. The other cohomology component takes values in $\Lambda^{1,1}V\ot\gsu(n)$. We see that
\[\left(\ker \pi_2 \colon H^2_{>0}(\g_-,\bar\g)\to H^2_{>0}(\g_-,\bar\g/
		\g)\right)=H^2_{>0}(\g_-,\bar\g).\]

In the following theorem we shall associate $\g_{-1}^*$ with $\g_{-1}$ with
the help of the canonical hermitian form.  Then $\gu(n)$ is generated by $u\we_J
v= u\we v + Ju\we Jv$, where $u,v\in\g_{-1}.$

Let $e_i$ be an orthonormal basis of $\g_{-1}$ with respect to the canonical
hermitian form. Notice that $z=[e_i,Je_i]\in\g_{-2}$
does not depend on the choice of $i$.

\begin{thm}\label{thm4}
The cohomology of sub-Riemannian contact
structure with maximally symmetric symbol is a direct sum of
$H^2_{>0}(\g_-,\bar\g)$ and the component generated by the
homogeneity 2 symmetric tensors
\begin{equation} \label{t5:e1}
\alpha_{(pq)}=\sum_t\left( e_p\we_J e_t\ot e_t^* \we e_q^* +
e_q\we_J e_t\ot e_t^* \we e_p^* \right) + (Je_p \ot e_q^* + Je_q\ot e_p^*)\we z^*.
\end{equation}
\end{thm}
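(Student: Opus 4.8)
The plan is to read off the two summands from the general splitting of $H^2_{>0}(\g_-,\g)$ established above and then to make the $H^1$-part completely explicit by pushing symmetric tensors through the connecting homomorphism. Recall that for the contact symbol $\bar\g=\gsu(n+1,1)$ has depth $k=2$, grading element $E$, and $\bar\g_0=\gu(n)\op\R E=\g_0\op\R E$ with $E\notin\g_0$. Moreover, as observed just before the statement, $\ker\pi_2=H^2_{>0}(\g_-,\bar\g)$ in the contact case. Hence the $H^2$-part of $H^2_{>0}(\g_-,\g)$ is all of $H^2_{>0}(\g_-,\bar\g)$, which accounts for the first summand; it remains to identify the $H^1$-part $H^1_{>0}(\g_-,\bar\g/\g)/H^1_{>0}(\g_-,\bar\g)$ with the span of the tensors $\alpha_{(pq)}$.

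First I would determine the $H^1$-part as an abstract $\g_0$-module via Theorem~\ref{T:cohom}. Since $\bar\g_0=\g_0\op\R E$, part (2) gives $H^1_1(\g_-,\bar\g/\g)=0$ and part (3) gives $H^1_2(\g_-,\bar\g/\g)=\operatorname{Sym}(\g_{-1},\bar\g_1)$, while part (1) kills every homogeneity $>k+1=3$. The only boundary case left is homogeneity $3$, which I would dispose of using Theorem~\ref{T:cohom_gen} with $i=2$: here $\bar\g_3=0$ and $\bar\g_2$ is one-dimensional, so $H^1_3$ is the kernel of $\partial\colon\Hom(\g_{-1},\bar\g_2)\to H^2_3(\g_-,\bar\g^1/\g)$; a short computation shows that for $\phi\neq0$ the $\bar\g_1$-component of $\partial(\phi\ot\zeta)$ on $\we^2\g_{-1}$ is not proportional to the symplectic form $\o$, hence not a coboundary, so the kernel is trivial and $H^1_3=0$. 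Finally, for the contact grading of $\gsu(n+1,1)$ one has $H^1_{>0}(\g_-,\bar\g)=0$ (Remark~\ref{rem1}: type $A$ contact, not in the exceptional projective list), so the quotient is harmless and the $H^1$-part equals $\operatorname{Sym}(\g_{-1},\bar\g_1)$.

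Next I would produce the representatives. By the description of the connecting map in the proof of the splitting theorem, a class $h\in H^1_2(\g_-,\bar\g/\g)$ represented by $\alpha\in\Hom(\g_{-1},\bar\g_1)$ maps to $\delta(h)=[\partial\alpha]\in H^2_2(\g_-,\g)$, where $\partial$ is computed in $\bar\g$ and, because $\alpha$ is symmetric, lands in $\g$. For the symmetric tensor indexed by $(pq)$ I would take the lift $\alpha(e_s)=\langle e_s,e_p\rangle\,y_q+\langle e_s,e_q\rangle\,y_p$, with $y_t\in\bar\g_1$ dual to $e_t$ under the Hermitian form, and evaluate $\partial\alpha$ on the two types of basis bivectors. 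On $\we^2\g_{-1}$ one uses $[\g_{-1},\bar\g_1]\subset\bar\g_0=\g_0\op\R E$; the symmetry of $\alpha$ forces the $\R E$-component to cancel (this is exactly the kernel condition of Theorem~\ref{T:cohom}), and the surviving $\gu(n)$-part produces $\sum_t(e_p\we_J e_t\ot e_t^*\we e_q^*+e_q\we_J e_t\ot e_t^*\we e_p^*)$. On $\g_{-1}\we\g_{-2}$ one uses $[\g_{-2},\bar\g_1]\subset\g_{-1}$, and since bracketing with $z$ acts as the complex structure $J$ this yields $(Je_p\ot e_q^*+Je_q\ot e_p^*)\we z^*$. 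Assembling both pieces gives precisely $\alpha_{(pq)}$ of \eqref{t5:e1}; since $\{e_p\odot e_q\}$ is a basis of $\operatorname{Sym}(\g_{-1},\bar\g_1)$ and $\delta$ is injective (because $H^1_{>0}(\g_-,\bar\g)=0$), the classes $[\alpha_{(pq)}]$ span the $H^1$-part.

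The routine but genuinely delicate step is the last one: the explicit bracket computations in the matrix model \eqref{mat}, tracking the splitting $\bar\g_0=\gu(n)\op\R E$ together with the normalizations hidden in the identifications $\bar\g_1\cong\g_{-1}$ and $\g_{-1}^*\cong\g_{-1}$. The main obstacle is verifying that the $\gu(n)$-component of $[e_a,y_b]$ is the expected multiple of $e_b\we_J e_a$ and that the $\R E$-component drops out exactly for symmetric $\alpha$; once these two normalizations are pinned down, the matching with \eqref{t5:e1} is forced.
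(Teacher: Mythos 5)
Your proposal is correct and follows essentially the same route as the paper: invoke Theorem \ref{T:cohom} to identify the $H^1$-part with $\operatorname{Sym}(\g_{-1},\bar\g_1)$ in homogeneity $2$, dispose of homogeneity $3$ by a short explicit cochain computation, and obtain \eqref{t5:e1} by applying $\partial$ to the symmetric lifts. The only cosmetic difference is that you route the homogeneity-$3$ vanishing through Theorem \ref{T:cohom_gen}, reducing to $\Hom(\g_{-1},\bar\g_2)$, whereas the paper checks a general cochain $\beta\in C^1_3(\g_-,\bar\g/\g)$ directly; the content is the same.
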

\begin{proof}
Apply Theorems \ref{T:cohom_deg1} and \ref{T:cohom_deg2}. The formula \eqref{t5:e1} is obtained by applying differential to the
space of tensors from
$\Hom(\g_{-1},\bar\g_1)$ symmetric with respect to the Killing form. It remains to show that $H^1_3(\g_-,\bar\g/\g)=0$.

Let $y$ be a generator of $\bar\g_2$. Consider an arbitrary element $\beta\in C^1_3(\g_-,\bar\g/\g)$:
\[ \beta = A^i z^*\ot e_i+B_j e_j^*\ot y. \]
For arbitrary $e_k,e_i\in\g_{-1}$ such that $[e_k,e_j]=0$ we have
\[ \partial \beta(e_k,e_j)= B_j [e_k,y]. \]
As a result $B_j=0$ for arbitrary $j$. Finally
\[ \partial \beta = A^i \partial(z^*)\ot e_i \mod \bar\g^0/\g \]
implies that $A^i=0$ for arbitrary $i$ as well.
\end{proof}

Let us observe, that the latter symmetric tensors $\alpha_{(pq)}$ are cochains
generating the cohomology space. The generators in the common kernel of
$\partial$ and $\partial^*$ may look more messy in general.

\section{Free 2-step sub-Riemannian structures}\label{S:free}

\subsection{Generalities}

Let $M$ be a manifold of dimension $\frac{m(m+1)}2$ with $m\ge3$. We say that distribution $D$ of dimension $m$ is a free distribution on $M$ if $D+[D,D]=TM$. The general parabolic geometry theory implies that there exists a natural regular normal Cartan connection of type $(\bar G,\bar P)$ where $\bar G$ is a connected component of $SO(m+1,m)$ and $P$ is a stabilizer of an isotropic $m$-dimensional subspace in $\R^{2m+1}$, cf \cite[Section 4.3.2]{CS09}. The Lie algebras of $\bar G$ and $\bar P$ are
\[ \bar\g=\left\{
\begin{pmatrix}
A & X & Y
\\
Z & 0 & -X^t
\\
T & -Z^t & -A^t
\end{pmatrix}
\right\}, \quad
\bar\gp=\left\{
\begin{pmatrix}
A & 0 & 0
\\
Z & 0 & 0
\\
T & -Z^t & -A^t
\end{pmatrix}
\right\},  \]
where  $A,Y,T\in \operatorname{Mat}(m,\R)$, $X,Z\in\R^n$,
$Y+Y^t=T+T^t=0$. We introduce the following basis in $\bar\g$
\[ e^{[ij]}=T_j^i-T_i^j,\,\, e^j=Z^j, \,\, e^i_j=A^i_j, \,\, e_j=X_j, \,\,
e_{[ij]}=Y_j^i-Y^j_i . \]

The metric $g$ defines a reduction of $\bar P$-principal bundle
$\bar{\mathcal{G}}$ to $G_0=SO(m,\R)$-principal bundle $ \mathcal{G} $
of
orthogonal frames. The sub-Riemannian structure on top of the
distribution $D$
could be given in terms of orthonormal frame $X_1,\dots,X_m$ on $D$.
Let's
define $X_{[ij]}=[X_i,X_j]$. Due to the fact that $D$ is a free distribution the graded symbol of $\{X_i,X_{[jk]}\}$ is given by $e_i$, $e_{[jk]}$
with the
same relations as in $\bar{\g}$. The Lie algebra $\g_0$ is $\mathfrak{so}(m,\R)$
generated by $s^i_j=e^i_j-e_i^j$. The infinitesimal model of the corresponding sub-Riemannian structure is given by
\[ \g=\g_{-2}\op \g_{-1} \op \g_{0} = \langle e_{[ij]} \rangle \op
\langle
e_{k} \rangle \op \langle s^i_j \rangle. \]

The structure of the cohomology group $H^2(\g_-,\bar\g)$  depends on $m$. For $n > 3$ the cohomology
is the highest weight component in $\g^*_{-1}\ot \g^*_{-2} \ot \g_{-2}$. This means that the complete obstruction to local flatness of a free step-$2$ distribution for $n>3$ is given by a torsion. On the contrary the obstruction to local flatness for $n=3$ is given by a curvature. The cohomology group $H^2(\g_-,\bar\g)$ in this case is the highest weight component in $\g^*_{-1}\ot \g^*_{-2} \ot \bar\g_{0}$.

The following 2 theorems give explicit description of the harmonic curvature $H^2_{>0}(\g_-,\g)$.
Again, notice that the theorems provide generators at the level of cochains
generating the cohomology space. The generators in the common kernel of
$\partial$ and $\partial^*$ would look much more messy.

\begin{thm}\label{T:freeH2}
The $H^2$-part of $H^2_{>0}(\g_-,\g)$ coincides with $H^2(\g_-,\bar \g)$ for $n>3$. If $n=3$ then $H^2$-part of $H^2_{>0}(\g_-,\g)$ is a 1-dimensional subspace of 27-dimensional $H^2(\g_-,\bar\g)$. It is generated by
\begin{equation}\label{free3_H2}
\sum_{(i,j,k)\in \mathfrak{S}_3 } (-1)^{sgn((i,j,k))} s^i_j\ot e^*_j\we e^*_{[jk]}
+ \sum_{\{i,j,k\}} e_i\ot e^*_{[ij]} \we e^*_{[ik]}
%+ e_1\ot e^*_{[12]} \we e^*_{[13]}
%+ e_2\ot e^*_{[12]} \we e^*_{[23]}
%+ e_3\ot e^*_{[13]} \we e^*_{[23]}
\end{equation}
\end{thm}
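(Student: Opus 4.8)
The plan is to identify the $H^2$-part of $H^2_{>0}(\g_-,\g)$ with the kernel $\ker\pi_2\subseteq H^2_{>0}(\g_-,\bar\g)$ furnished by the theorem of Section \ref{S:cohom} that splits $H^2_{>0}(\g_-,\g)$ into an $H^1$-part and an $H^2$-part, and then to pin down this kernel separately in the two regimes. The guiding principle, recorded in the second remark of that section, is that $\pi_2$ annihilates every class admitting a representative valued in $\g$.

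For $\dim D>3$ the statement is immediate. As recalled just before the theorem, $H^2(\g_-,\bar\g)$ is then the highest weight component of $\g_{-1}^*\ot\g_{-2}^*\ot\g_{-2}$, so each of its classes is represented by a cochain with values in $\g_{-2}\subset\g$. Hence $\pi_2$ vanishes identically and $\ker\pi_2=H^2(\g_-,\bar\g)$, which is exactly the claim that the $H^2$-part coincides with $H^2(\g_-,\bar\g)$.

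The case $\dim D=3$ is the substantial one: here $H^2(\g_-,\bar\g)$ is a $27$-dimensional irreducible $\gsl(3)$-module sitting in homogeneity $3$, with Kostant-harmonic representatives in the slot $\g_{-1}^*\we\g_{-2}^*\ot\bar\g_0$. I would first prove the lower bound $\dim\ker\pi_2\ge1$ by verifying that the explicit cochain \eqref{free3_H2} is a cocycle: applying $\partial$ and using the bracket relations $[e_i,e_j]=e_{[ij]}$ together with the $\so(3)$-action of the $s^i_j$ on $\g_{-1}$ and $\g_{-2}$, the two summands combine so that $\partial$ of the expression vanishes, and a glance shows it is not a coboundary. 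Since both summands take values in $\g_0\op\g_{-1}\subset\g$, its class lies in $\ker\pi_2$.

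For the upper bound I would use that $\ker\pi_2$ is a $\g_0=\so(3)$-submodule of the $27$. As $\g_{-1}\cong\R^3$ carries the standard representation, branching the module to $\so(3)$ produces irreducible summands of dimensions $1,5,5,7,9$, so the trivial summand occurs with multiplicity one and is spanned by the invariant cochain \eqref{free3_H2}. It then remains to show $\pi_2$ is injective on the sum of the non-trivial isotypic pieces. Concretely, one takes a harmonic representative $c\in\g_{-1}^*\we\g_{-2}^*\ot\bar\g_0$, forms $\pi(c)$ by projecting its value onto the symmetric part $\bar\g_0/\g_0$, and tests whether $\pi(c)$ lies in the image of $\partial\colon\g_{-1}^*\ot\bar\g_2\op\g_{-2}^*\ot\bar\g_1\to C^2_3(\g_-,\bar\g/\g)$; carrying this out on highest weight vectors of the non-trivial summands and checking that the resulting obstructions are nonzero and linearly independent forces $\dim\ker\pi_2=1$. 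This injectivity of $\pi_2$ away from the invariants is the main obstacle --- it is the only genuinely computational input and is what fixes the $H^2$-part as the single line \eqref{free3_H2}. Equivalently one could continue the long exact sequence \eqref{eq1} for the pair $\g\subset\bar\g$ to degree three and compute the connecting map $\delta\colon H^2_3(\g_-,\bar\g/\g)\to H^3_3(\g_-,\g)$, using $\im\pi_2=\ker\delta$; I expect the direct highest weight evaluation to be the cleaner route.
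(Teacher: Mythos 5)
Your proposal is correct in outline and matches the paper for $n>3$ (both reduce the $H^2$-part to $\ker\pi_2$ and observe that the harmonic module is valued in $\g_{-2}\subset\g$, so $\pi_2\equiv0$), but for $n=3$ you organize the key finite check differently from the paper. The paper first invokes the vanishing $H^1_3(\g_-,\bar\g/\g)=0$, established inside the proof of Theorem \ref{T:freeH1}: since also $C^1_3(\g_-,\g)=0$, no nonzero coboundary $\partial\alpha$ with $\alpha\in C^1_3(\g_-,\bar\g)$ can be valued in $\g$, so membership in $\ker\pi_2$ can be tested directly on representatives; what remains is the elementary question of which elements of the highest weight component of $\g_{-1}^*\ot\g_{-2}^*\ot\bar\g_0$ take values in $\g_0=\so(3)$, and the answer is the line of \eqref{free3_H2}. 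You instead use $\so(3)$-equivariance: the branching $27=1+5+5+7+9$ (which is correct) caps the invariants at one line, and you propose to exclude the nontrivial summands by proving $\pi_2$ injective on them, testing on highest weight vectors whether $\pi(c)$ is a coboundary in $C^2_3(\g_-,\bar\g/\g)$. That is logically sound, and your explicit lower-bound step (checking \eqref{free3_H2} is closed, $\g$-valued and non-exact) is something the paper leaves implicit; but your upper-bound computation is heavier than necessary, since deciding membership in $\im\partial$ involves the $18$-dimensional space $C^1_3(\g_-,\bar\g/\g)$ for each isotypic piece, whereas borrowing $H^1_3(\g_-,\bar\g/\g)=0$ from Theorem \ref{T:freeH1} collapses the whole question to the pointwise condition ``valued in $\g_0$.'' One small caution: ``a glance shows it is not a coboundary'' is optimistic, because $C^1_3(\g_-,\bar\g)\cong(1+3+5)^{\oplus2}$ as an $\so(3)$-module does contain a two-dimensional space of invariants; non-exactness is most cleanly deduced from the same vanishing $H^1_3(\g_-,\bar\g/\g)=0$ (a $\g$-valued coboundary in homogeneity $3$ must vanish), so that lemma is really the efficient engine for both bounds.
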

\begin{proof}
The first statement of the theorem is obvious since $H^2_{>0}(\g_-,\bar\g)$ takes values in $\g_-$ for $n>3$. For $n=3$ note that according to the next theorem $H_3^1(\g_-,\bar\g/\g)=0$ which implies that $\partial\colon C^1_3(\g_-,\bar\g)\to C^2_3(\g_-,\bar\g)$ never
takes values in
$C^2_3(\g_-,\g)$. Now the direct check shows that the only elements in the highest weight component of $\g^*_{-1}\ot \g^*_{-2} \ot \bar\g_{0}$ which take values in $\g_{0}$ are proportional to \eqref{free3_H2}.
\end{proof}

\begin{thm}\label{T:freeH1}
The $H^1$-part of $H^2_{>0}(\g_-,\g)$ consists of 2 subspaces:
	\begin{itemize}
		\item homogeneity $1$ subspace is generated by tensors
		\[ \quad\alpha_{(ij)}^k =
		\left( e_j \ot e^*_i + e_i \ot e^*_j +
		\sum_t \left(e_{[jt]} \ot  e^*_{[it]} + e_{[it]} \ot e^*_{[jt]}
		\right)
		\right)
		\we e^*_k
		\]
symmetric and traceless in $(i,j)$;
		\item homogeneity $2$ subspace is generated by tensors
		\[\alpha_{(pq)}=\sum_t e_t\ot (e^*_{[tp]}\we e^*_q + e^*_{[tq]}\we
		e^*_p)+\sum_{t,r} e_{[tr]}\ot e^*_{[tp]}\we e^*_{[qr]}\]
symmetric in $(p,q)$.
	\end{itemize}
\end{thm}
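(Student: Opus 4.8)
The $H^1$-part of $H^2_{>0}(\g_-,\g)$ is, by the structural decomposition of $H^2_{>0}(\g_-,\g)$ established at the beginning of Section~\ref{S:cohom}, the image under the connecting homomorphism $\delta$ of $H^1_{>0}(\g_-,\bar\g/\g)/H^1_{>0}(\g_-,\bar\g)$. Since a free $2$-step distribution is neither a projective nor a projective contact geometry, Remark~\ref{rem1} gives $H^1_{>0}(\g_-,\bar\g)=0$, so the $H^1$-part is isomorphic to $H^1_{>0}(\g_-,\bar\g/\g)$ itself, sitting inside $H^2_{>0}(\g_-,\g)$ via $\delta$. My plan is therefore to (i) compute $H^1_{>0}(\g_-,\bar\g/\g)$ homogeneity by homogeneity with Theorem~\ref{T:cohom}, and (ii) push each surviving cocycle through $\delta$, i.e.\ lift it to a $1$-cochain in $C^1(\g_-,\bar\g)$ and apply $\partial$, recovering the stated generators inside $C^2(\g_-,\g)$.

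For step (i) I would invoke Theorem~\ref{T:cohom}: the grading element $E\in\bar\g_0=\gl(m,\R)$ is the scalar matrix, hence $E\notin\g_0=\so(m,\R)$, and the theorem applies. In homogeneity $1$ it yields $\g_{-1}^*\ot(\bar\g_0/(\g_0\op\R E))=\g_{-1}^*\ot\operatorname{Sym}_0(m)$, the traceless symmetric matrices, generated by $S_{(ij)}=e^i_j+e^j_i$ paired with a covector $e^*_k$. In homogeneity $2$, since for a free distribution $\g_{-2}=\we^2\g_{-1}$ forces $\partial(\g_{-2}^*)=\we^2\g_{-1}^*$, the last clause of Theorem~\ref{T:cohom} gives exactly $H^1_2(\g_-,\bar\g/\g)=\operatorname{Sym}(\g_{-1},\bar\g_1)\cong\operatorname{Sym}^2\g_{-1}^*$, the symmetric pairs $(pq)$. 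It then remains to rule out a homogeneity $3$ contribution (there is nothing above $3$ by Theorem~\ref{T:cohom}(1)). For this I would use the criterion \eqref{eq4} of Theorem~\ref{T:cohom_gen} at $i=2$, showing $\delta\colon\Hom(\g_{-1},\bar\g_2)\to H^2_3(\g_-,\bar\g^1/\g)$ is injective: a lift $\hat\beta=\beta+\gamma$ with $\beta\in\Hom(\g_{-1},\bar\g_2)$ and $\gamma\in\Hom(\g_{-2},\bar\g_1)$ is a cocycle only if $\gamma(e_{[ab]})=[e_a,\beta_b]-[e_b,\beta_a]$, and substituting this into the $\operatorname{Sym}(m)$-valued component $\partial\hat\beta(e_a,e_{[pq]})=[e_a,\gamma(e_{[pq]})]-[e_{[pq]},\beta_a]$ and forcing it to vanish should collapse $\beta$ to zero, exactly as in the contact computation of Theorem~\ref{thm4}.

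For step (ii) I evaluate $\delta$ on the two surviving pieces. In homogeneity $1$ the lift $\phi=e^*_k\ot S_{(ij)}$, supported on $\g_{-1}$ with values in $\bar\g_0$, has $\partial\phi$ automatically $\g_-$-valued, since $C^2_1(\g_-,\bar\g)$ leaves no room for a $\bar\g_0$-slot; writing $\partial\phi(e_a,e_b)=\delta_{bk}[e_a,S_{(ij)}]-\delta_{ak}[e_b,S_{(ij)}]$ on $\we^2\g_{-1}$ and $\partial\phi(e_a,e_{[pq]})=-\delta_{ak}[e_{[pq]},S_{(ij)}]$ on $\g_{-1}\we\g_{-2}$, and reading off the adjoint action of $S_{(ij)}=e^i_j+e^j_i$ on $\g_{-1}$ and on $\g_{-2}=\we^2\g_{-1}$, reproduces precisely $\alpha^k_{(ij)}=S_{(ij)}\we e^*_k$, traceless symmetric in $(i,j)$. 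The homogeneity $2$ case is the genuinely delicate one, and I expect it to be the main obstacle: here $C^2_2(\g_-,\bar\g)$ does carry a $\bar\g_0$-slot, and for the lift $\phi_{(pq)}$ of a symmetric tensor the component $\partial\phi_{(pq)}(e_a,e_b)$ lands in $\bar\g_0$. The Killing-symmetry of $\phi_{(pq)}$ (the defining feature of $\ker\pi\circ\partial$ in Theorem~\ref{T:cohom}) is exactly what forces its $\operatorname{Sym}(m)$-part to cancel against the $\g_{-2}\to\bar\g_0$ part of the lift, leaving only a $\g_0$-valued remainder on $\we^2\g_{-1}$, together with a $\g_{-1}$-valued part on $\g_{-1}\we\g_{-2}$ and a $\g_{-2}$-valued part on $\we^2\g_{-2}$. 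Since the displayed $\alpha_{(pq)}$ has no $\g_0$-component, the final step is to subtract the coboundary $\partial\chi$ of the unique $\chi\in\Hom(\g_{-2},\g_0)$ with $\chi(e_{[ab]})=\partial\phi_{(pq)}(e_a,e_b)|_{\g_0}$; because $\partial\chi$ annihilates the $\we^2\g_{-2}$-slot, this removes the $\g_0$-part and corrects the $\g_{-1}$-valued term into $\sum_t e_t\ot(e^*_{[tp]}\we e^*_q+e^*_{[tq]}\we e^*_p)$ while preserving the $\g_{-2}$-valued term $\sum_{t,r}e_{[tr]}\ot e^*_{[tp]}\we e^*_{[qr]}$. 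The recurring difficulty throughout is this homogeneity $2$ bookkeeping: tracking which of the $\g_0$-, $\g_{-1}$- and $\g_{-2}$-valued slots each bracket in $\so(m+1,m)$ feeds, and verifying that the coboundary adjustment lands one exactly on the stated representative.
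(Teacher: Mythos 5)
Your overall route is the same as the paper's: decompose via the long exact sequence, read off homogeneities $1$ and $2$ from Theorem~\ref{T:cohom}, kill homogeneities $3$ and $4$ by a direct cocycle computation, and realize the $H^1$-part by applying $\partial$ to explicit lifts in $C^1(\g_-,\bar\g)$. The homogeneity $1$ and the homogeneity $3$ parts of your plan are fine (though the latter is only sketched -- ``should collapse $\beta$ to zero'' is precisely the computation the paper carries out with the commutation relations $[e_i,e^j]=e^j_i$ and $[e_{[jk]},e^{[ki]}]$, and it does need to be done).

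There is, however, a concrete error in your homogeneity $2$ bookkeeping. For $\chi\in\Hom(\g_{-2},\g_0)$ the coboundary does \emph{not} annihilate the $\we^2\g_{-2}^*$-slot: since $[\g_{-2},\g_0]\subseteq\g_{-2}$ is nontrivial, one has $\partial\chi(e_{[ab]},e_{[cd]})=[e_{[ab]},\chi(e_{[cd]})]-[e_{[cd]},\chi(e_{[ab]})]\neq 0$ in general, and this lands exactly in the $\we^2\g_{-2}^*\ot\g_{-2}$ component of homogeneity $2$. So your final ``subtract $\partial\chi$'' step changes the $\g_{-2}$-valued term as well as removing the $\g_0$-part, and the claim that you land exactly on the stated $\alpha_{(pq)}$ is not justified as written (you would only get a cohomologous cochain). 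A second, smaller imprecision: the Killing-symmetry of the $\Hom(\g_{-1},\bar\g_1)$-piece only kills the $\R E$ (trace) component of $\partial\phi_{(pq)}(e_a,e_b)$; the traceless symmetric part of $\bar\g_0$ is cancelled not by symmetry but by the freedom in choosing the $\Hom(\g_{-2},\bar\g_0)$-component of the lift, which exists because $\partial\colon\g_{-2}^*\to\we^2\g_{-1}^*$ is onto for free distributions. The paper sidesteps both issues by writing the full lift in one stroke,
\[
\beta_{(pq)}=\tfrac12\Bigl(e^q\ot e^*_p+e^p\ot e^*_q-\sum_i\bigl(e^q_i\ot e^*_{[pi]}+e^p_i\ot e^*_{[qi]}\bigr)\Bigr),
\]
whose $\Hom(\g_{-2},\bar\g_0)$-part makes $\partial\beta_{(pq)}$ vanish on $\we^2\g_{-1}$ entirely, so no subsequent coboundary adjustment is needed and $\partial\beta_{(pq)}=\alpha_{(pq)}$ on the nose. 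You should either adopt such a lift or redo the $\we^2\g_{-2}^*$-component after subtracting $\partial\chi$.
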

\begin{proof}
In homogeneity 4 we apply Theorem \ref{T:cohom_deg2} to obtain that $H^1_4(\g_-,\bar{\g}/\g)=0.$  An arbitrary element of homogeneity $3$ has the form
	\[ \beta = A^{[rt]}_p  e^p\ot e^*_{[rt]} + B^p_{[rt]} e^{[rt]}\ot
	e^*_p.\]
%The commutation relations are given by:
%\begin{align*}
%[e^i,e^j]&=e^{[ij]}, & [e^{[ij]},e^k_j]&=e^{[ik]}, &
%[e^j,e^i_j]&=e^i,
%\\
%[e^{[ij]},e_j]&=e^i, & [e_i,e^j]&=e^j_i, & [e^i_j,e_i]&=e_j, 	
%\\
%[e_i,e_j]&=e_{[ij]}, & [e^i_j,e_{[ik]}]&=e_{[jk]}, &
%[e^i,e_{[ij]}]&=e_j,
%\\
%[e_{[jk]},e^{[ki]}]&=
%\begin{cases}
%e^i_j, & i\neq j \\
%e^i_i+e^j_j, & i=j
%\end{cases}
%\end{align*}
To compute the differential of $\beta$ we need the following commutation relations:
\[ [e_i,e^j]=e^j_i,\quad
[e_{[jk]},e^{[ki]}] =
\begin{cases}
e^i_j, & i\neq j \\
e^i_i+e^j_j, & i=j
\end{cases}\]
Using the relations above we obtain
	\begin{multline*}
	\p \beta(e_{[ij]}\we e_k)=
	A_p^{[ij]} [ e^p,e_k] + B^k_{[rt]} [ e_{[ij]}, e^{[rt]} ]
	\\
	= A_p^{[ij]} e^p_k
	+ \sum_{i<t} B^k_{[it]} [ e_{[ij]}, e^{[it]} ]
	+ \sum_{j<t} B^k_{[jt]} [ e_{[ij]}, e^{[jt]} ]
	\\
	+ \sum_{r<i} B^k_{[ri]} [ e_{[ij]}, e^{[ri]} ]
	+ \sum_{r<j} B^k_{[ri]} [ e_{[ij]}, e^{[rj]} ]
	\\
	= A_p^{[ij]} e^p_k
	+ \sum_t  B^k_{[it]} [ e^{[ti]}, e_{[ij]} ]
	+ \sum_t  B^k_{[tj]} [ e^{[tj]}, e_{[ji]} ]
	\\
	= A_p^{[ij]} e^p_k
	-  B^k_{[it]}(e^t_j+\delta^t_j e^i_i)
	-  B^k_{[tj]}(e^t_i+\delta^t_i e^j_j)
	\end{multline*}
The coefficient in front of $e^i_i$ is equal to
	\[ \delta_i^k A^{[ij]}_i - 2 B^k_{[ij]} \]
and should be equal to 0 since we want $\p \beta(e_{[ij]}\we e_k)\in \so_n(\R).$ Therefore if $k\neq i$ then $B^k_{[ij]}=0$ which implies that all $B^k_{[ij]}$ are equal to $0$. This yields
	\[ \p \beta(e_{[ij]}\we e_k)= e^r_k A_r^{[ij]} \]
and therefore $A_r^{[ij]}=0$ follows immediately from the requirement
	$\p
	\beta(e_{[ij]}\we e_k)\in
	\so_n(\R).$ We conclude that $H^1_3(\g_-,\bar{\g}/\g)=0.$

Using Theorem \ref{T:cohom_deg1} we obtain that 	$H^1_1(\g_-,\bar{\g}/\g)$ is
	a traceless in $(i,j)$ part of the space generated by elements
	\[\beta^k_{(ij)} =(e_j^i+e_i^j) \ot  e^*_k
	,\]
and that elements from $H^1_2(\g_-,\bar{\g}/\g)$ correspond to symmetric tensors from $\Hom(\g_{-1},\bar\g_1)$. The explicit formula for generators is:
	\[ \beta_{(pq)} = \frac12 \left(  e^q \ot e^*_p +  e^p\ot e^*_q -
	\sum_i\left(e^q_i \ot e^*_{[pi]}  +  e^p_i \ot e^*_{[qi]}\right)
	\right). \]
	
	In order to obtain $H^1$-part of $H^2(\g_-,\g)$ it
	remains to
	take differential of representatives in $H^1(\g_-,\bar{\g}/\g)$. We
	have
	\[ \alpha_{(ij)}^k=\partial \beta_{(ij)}^k = \left( e_j \ot e^*_i + e_i \ot e^*_j +
	\sum_t \left(e_{[jt]} \ot  e^*_{[it]} + e_{[it]} \ot e^*_{[jt]}
	\right) \right)
	\we e^*_k;
	\]
	\[\alpha_{(pq)}=\partial \beta_{(pq)}=\sum_t e_t\ot (e^*_{[tp]}\we e^*_q +
	e^*_{[tq]}\we
	e^*_p)+\sum_{t,r} e_{[tr]}\ot e^*_{[tp]}\we e^*_{[qr]}.\]
\end{proof}

\subsection{The Cartan connections}
The latter theorem provides helpful information how to proceed when
building distinguished connections via the classical exterior calculus. Let
us illustrate this by following the computations along the lines of
\cite[Section 4]{DS10}.

Suppose $n>3$. We start by fixing any local orthonormal
frame $X_1,\dots,X_n$ of $D$
and complete it to a frame of $TM$ by adding the vector fields $X_{[ij]}=
[X_i,X_j]$. Let $\{\theta^i,\theta^{[jk]}\}$ be the corresponding coframe.
Clearly, the annihilator $D^\perp\subset T^*M$
is generated by the forms $\theta^{[jk]}$,
and by the very construction,
$$
d\theta^{[jk]}=-\theta^j\wedge \theta^k \quad\operatorname{mod} D^\perp
.
$$
The canonical Cartan connection $\omega$ will consist of components $\om^i$
and $\om^{[jk]}$ (building the soldering form on $M$), and the connection
forms $\om^i_j$ (providing a matrix of forms, valued in $\mathfrak{so}(\g_{-1})$).
While the components $\om^{[jk]}=\theta^{[jk]}$ can stay
fixed, the other ones have to be adjusted according to our normalization.

Now, assume we have got the canonical connection $\om$ and deal with the data of homogeneity one.
In the chosen frame, we may write
\begin{align*}
\om^i &= \theta^i+C^i_{[jk]}\om^{[jk]}
\\
\om^i_j &= A^i_{kj}\om^k \quad\operatorname{mod} D^\perp
.\end{align*}
We want to find the right coefficients $C^i_{[jk]}$ which provide the
complement $D'$ to $D$ in $TM$, uniquely determined by the co-closed
curvature normalization. At the same time the coefficients
$A^i_{jk}$ are the Christoffel
coefficients of the restriction of the wanted metric connection to $D$.
Actually,
the latter Christoffel symbols uniquely extend to a metric connection on the
entire $TM$ and we know that there is exactly one such metric connection for
each choice of the torsion. Thus we have to focus on the conditions on
$C^i_{[jk]}$ and $A^i_{jk}$ leading to the proper normalization of the
torsion.

We know that the splitting $TM=D\oplus D'$ has to be respected by the
connection and this itself fixes the symmetric parts of the torsion
components $D\times D'\mapsto D$ and $D'\times D\mapsto D'$.  Further we
know that the $H^2$-part of the torsion is the completely tracefree
component in $D'{}^*\wedge D^*\otimes D'$.

Since our algebra $\g_0$
provides much smaller freedom in normalizations than the entire
$\bar\g_{\ge0}$, we cannot require to kill all other components of the
homogeneity one torsion.  But the description of the generators from the
Theorem \ref{T:freeH1} implies that the homogeneity one torsion
component $D'{}^*\wedge D^*\otimes D'$ can be uniquely normalized
to be symmetric (with
respect to the metric on $D'$) and including only the one trace component
aside the completely tracefree part. This pins down the choice of $D'$, i.e.
the coefficients $C^i_{[jk]}$.

The latter
requirements, including the vanishing of the
antisymmetric part, also define the derivative on $D'$ in the $D$
directions, which is a homogeneity one data, too.

To conclude the homogeneity one part of the torsion, we have to fix its
$D^*\wedge D^*\otimes D$ component. Although Theorem \ref{T:freeH1} suggests
there should be some non-trivial link with the trace part of the torsion
$D'{}^*\wedge D^*\otimes D'$, we may also choose this component to vanish
(although the curvature of the resulting Cartan connection will not be
co-closed any more finally).
Any such choice fixes the derivative on $D$
in the $D$ directions by the Koszul formula.

In the homogeneity one step of our construction, we
have used only part of the freedom in our normalization of the forms $\om^i_j$.  Now we have to write
$$
\om^i_j = A^i_{kj}\om^k + E^i_{j[kl]}\om^{[kl]}
,$$
with already known coefficients $A^i_{kj}$, but
with $E^i_{j[kl]}$ to be still determined by exploiting the homogeneity two
data.

We may proceed the same way as in homogeneity one with $D$ and $D'$ swapped.
First, the remaining freedom is used to ensure that torsion
component $D\times D^\perp\mapsto D$ will be symmetric with respect to the
metric on $D$ (remember the symmetric part is fixed by the splitting,
but we can kill the
anti-symmetric part). This component of the torsion already contributes to
the expected curvature components, cf. the
homogeneity 2 cohomology generator in
the $H^1$-part of the harmonic curvature in the above theorem.
Finally, the torsion of the restriction of the connection to $D'$
would be strictly related to the latter symmetric component, but we may also
choose this to vanish. Any choice of this torsion component again defines the
remaining part of the connection by the Koszul formula.
This complete procedure exactly uses all our freedom.

Let us observe that although we could explicitly construct the normalized
Cartan connection with the co-closed curvature, the simplified choices might
be even more useful in practice.

\section{Constant curvature spaces}\label{S:constCurv}

Constant curvature Cartan geometries are generalizations of the classical Riemannian constant sectional curvature spaces. They could be regarded as the most simple examples of Cartan geometries of a particular type.
\begin{dfn}
Let $(\mathcal G\to M, \om)$ be a Cartan connection of a sub-Riemannian structure on $M$. We say that $\om$ has a \emph{constant curvature} if the curvature function
$\kappa:\mathcal G\to \g\otimes\Lambda^2\g_-^*$ is constant.
\end{dfn}
It follows from the definition immediately that if $\om$ has a constant curvature then $\kappa$  takes values in $G_0$-invariant part of  $\g\otimes\Lambda^2\g_-^*$. In particular, a Cartan connection of a sub-Riemannian structure has constant curvature only if harmonic curvature takes values in $G_0$-invariant part of $H^2(\g_-,\g)$.
Moreover, harmonic curvature completely defines the whole curvature function. Therefore to every element in $\Inv_{G_0} H^2(\g_-,\g)$ corresponds at most one constant curvature space. In other words, there exists an injective map:
\[ \mbox{\{classes of locally equivalent constant curvature spaces\}} \to \Inv_{G_0} H^2(\g_-,\g). \]

This simple fact motivates our classification strategy for  constant curvature sub-Riemannian manifolds: first we compute $G_0$-invariant part of $H^2(\g_-,\g)$ and then provide models corresponding to elements of $H^2(\g_-,\g)$.

In this section we obtain a full classification of constant curvature spaces for maximally symmetric contact sub-Riemannian structures and free 2-step sub-Riemannian structures. We start with the maximally symmetric contact case.
\begin{thm}\label{T:constCurvContact}
Every non-flat $(2n+1)$-dimensional constant curvature space with the maximally symmetric contact symbol is locally equivalent to the sphere in $\C^{n+1}$ or pseudo-sphere in $\C^{1,n}$. The distribution in this case is a maximal complex subspace in the tangent space to the (pseudo)-sphere.
Up to scale, the metric coincides with the one which is induced from the
ambient space.
\end{thm}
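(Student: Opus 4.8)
The strategy is to combine the injectivity of the map into $\Inv_{G_0}H^2(\g_-,\g)$ with an explicit computation of that invariant space for the maximally symmetric contact symbol, and then to exhibit the (pseudo)-spheres as models realizing the nonzero invariants. First I would compute $\Inv_{G_0}H^2(\g_-,\g)$ where $\g_0=\gu(n)$. By Theorem~\ref{thm4} the cohomology $H^2_{>0}(\g_-,\g)$ splits as $H^2_{>0}(\g_-,\bar\g)$ together with the $H^1$-part generated by the symmetric tensors $\alpha_{(pq)}$ of \eqref{t5:e1}. For the classification I would show that the only $\gu(n)$-invariant element comes from the homogeneity two piece: the contraction $\sum_p\alpha_{(pp)}$ is the unique (up to scale) invariant symmetric tensor, since $\gu(n)$ acts irreducibly on the tracefree symmetric tensors and the trace part $\sum_p \alpha_{(pp)}$ spans the invariant line. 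Crucially, the homogeneity one $CR$-torsion component (the Nijenhuis-type tensor in $\Lambda^2 V^{1,0}\to V^{0,1}$) carries no $\gu(n)$-invariant, so any constant curvature model must have vanishing torsion, hence be \emph{CR-integrable}.

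The second step exploits this integrability. Since the $H^1$-part of $H^2_{>0}(\g_-,\bar\g)$ vanishes on an invariant (the torsion is zero), the associated $CR$-structure is integrable, and by the discussion in Section~\ref{S:contact} its canonical Cartan connection takes values in $\bar\g=\gsu(n+1,1)$. A constant-curvature sub-Riemannian structure is therefore built on top of a \emph{homogeneous} integrable $CR$-structure whose harmonic curvature $\bar\kappa_H$ is likewise $G_0$-invariant and constant. The only such CR-models are the flat CR-sphere and its pseudo-convex siblings; more precisely, the invariant in $\Inv_{G_0}H^2(\g_-,\g)$ forces the full curvature to be a fixed multiple of a standard invariant two-form, and the corresponding homogeneous space is $\gsu(n+1,1)/P$ restricted by the metric reduction, i.e.\ a real hypersurface which is the sphere in $\C^{n+1}$ or the pseudo-sphere in $\C^{1,n}$ depending on the signature of the ambient Hermitian form.

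The third step is the explicit verification that the (pseudo)-sphere realizes the invariant. Here I would take the standard hypersurface $\{\langle Z,Z\rangle=1\}$ with its induced $CR$-structure, let $H$ be the maximal complex subspace of the tangent bundle, and equip it with the metric induced from the ambient Hermitian form. One then checks directly that the symbol is the maximally symmetric contact symbol (so $I_g=J$, $\g=\g_-+\gu(n)$) and computes its harmonic curvature, confirming it lands on the invariant line spanned by $\sum_p\alpha_{(pp)}$ and is nonzero. Since the classifying map is injective, this pins down the model uniquely up to local equivalence, and the sign of the invariant (equivalently the signature of $\langle\cdot,\cdot\rangle$) distinguishes the sphere from the pseudo-sphere. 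That the metric is the induced one up to scale follows because the single scaling freedom in the invariant corresponds exactly to rescaling $\lambda_k=1$ in the symbol normalization.

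**Main obstacle.** The delicate point is the second step: passing from ``constant curvature with vanishing $H^1$-torsion'' to the concrete realization as a (pseudo)-sphere. I expect the hard part to be ruling out exotic homogeneous CR-models and showing that constancy of $\bar\kappa_H$ together with CR-integrability forces the ambient-space construction. This requires either invoking the classification of homogeneous CR-structures of this symbol type, or a direct integration of the Cartan connection $\omega$ valued in $\gsu(n+1,1)$ to produce the embedding into $\C^{n+1}$ (resp.\ $\C^{1,n}$); the latter is essentially a Maurer--Cartan/developing-map argument whose bookkeeping — matching the signature of the Hermitian form to the sign of the curvature invariant — is where the real work lies.
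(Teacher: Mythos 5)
Your proposal follows essentially the same route as the paper: compute $\Inv_{\gu(n)}H^2_{>0}(\g_-,\g)$, observe that the $H^2$-part (splitting into two non-trivial irreducibles) contributes no invariants while the $H^1$-part, identified with $S^2(\g_{-1}^*)$, contributes exactly the one-dimensional line spanned by $\sum_p\alpha_{(pp)}$, and then use injectivity of the classifying map together with the explicit (pseudo)-sphere realizations distinguished by the sign of the invariant. One minor caveat: the tracefree part of $S^2(\g_{-1}^*)$ is not $\gu(n)$-irreducible (it contains tracefree hermitian and $J$-anti-invariant pieces), but your conclusion that the invariant line is the trace component is correct and matches the paper's argument via $\gu(n)$-invariant hermitian operators being proportional to the identity.
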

\begin{proof}
As it was shown in the discussion preceding Theorem \ref{thm4} the space $H^2(\g_-,\bar\g)$ splits into two non-trivial irreducible representations of $\gu(n)$. Therefore elements which are invariant with respect to $\g_0=\gu(n)$ could exist only in $H^1$-part of $H^2(\g_-,\g)$.
According to Theorem \ref{T:cohom_deg2} \[H^1(\g_-,\bar\g/\g)=S^2(\g_{-1}^*)\]
as $\g_0$-module.
In order to compute $\gu(n)$-invariant subspace in  $S^2(\g_{-1}^*)$ we can restrict our attention to the $J$-invariant bilinear forms since $J\in \gu(n)$. This space can be identified with the space of hermitian operators on $\C^n$. Any $\gu(n)$-invariant hermitian operator is proportional to identity. This implies that $\Inv_{\g_0} H^1(\g_-,\bar\g/\g)$ is 1-dimensional.  We conclude that the family of constant curvature spaces with the maximally symmetric contact symbol is no more than 1-dimensional.
Depending on the sign of the curvature function every non-zero element in $H^1$-part of $H^2(\g_-,\g)$ can be realized either as the sphere in $\C^{n+1}$ or pseudo-sphere in $\C^{1,n}$.
\end{proof}

In terms of Lie algebras the homogeneous infinitesimal model for the sphere in $\C^{n+1}$  is a factor of
\[ \gu(n+1)= \begin{pmatrix}
iz & - u +iv \\
u+iv & U
\end{pmatrix}, \quad z\in\R,\,\, v,u\in\R^n; U\in \gu(n) \]
by the subalgebra
\[ \gu(n)= \begin{pmatrix}
0 & 0 \\
0 & U
\end{pmatrix} \]
with the distribution $D$ given by
\[ \begin{pmatrix}
0 & -u +iv \\
u+iv & *
\end{pmatrix} \]
and sub-Riemannian norm $\|u+iv\|^2 = \|u\|^2 + \|v\|^2$. The description for pseudo-sphere in $\C^{n,1}$ is analogues.

In order to attack the same problem for free 2-step distributions we need some well known results on representation theory of complex semisimple Lie algebras. We refer here to the appendix of \cite{Vinb} and use notations from there.

Consider a rank $l$ simple Lie algebra. Let $\pi_i,$ $1\le i \le l$
be the fundamental weights. For the convenience we assume that $\pi_0=\pi_{l+1}=0$. We denote by $R(\lambda)$ the
irreducible representation with the highest weight $\lambda$.  Then $R(\lambda_1)R(\lambda_2)$ defines a tensor product of representations $R(\lambda_1)$ and $R(\lambda_2)$, $S^k(\lambda)$ is a $k$-th symmetric power of $R(\lambda)$ and  $\Lambda^k(\lambda)$ is a $k$-th wedged power of $R(\lambda)$. For Lie algebras of type $B_l$, $l\ge2$ $(n=2l+1)$ we use the notation
\[
\Lambda^p(\pi_1)= \hat\pi_p =
\begin{cases}
\pi_p & 1\le p\le l-1,
\\
2\pi_l & p=l,l+1,
\\
\pi_{n-p} & l+2\le p\le 2l,
\end{cases}
\quad\hat\pi_0=\hat\pi_n=0.
\]
For Lie algebras of type $D_l$, $l\ge3$ $(n=2l)$ we use the notation
\[
\Lambda^p(\pi_1)=\hat\pi_p =
\begin{cases}
\pi_p & 1\le p\le l-2,
\\
\pi_{l-1}+\pi_l & p=l-1,l+1,
\\
\pi_{n-p} & l+2\le p\le 2l,
\end{cases}
\quad\hat\pi_0=\hat\pi_n=0,
\]
and $R(\hat\pi_l+\lambda)=R(2\pi_{l-1}+\lambda)+R(2\pi_{l}+\lambda).$
Finally we write $R_{\gsl}(\lambda)$ and  $R_{\gso}(\lambda)$ for
modules over $\gsl(n,\C)$ and $\gso(n,\C)$ respectively.

\begin{thm}\label{T:constCurvFree}
Every non-trivial sub-Riemannian constant curvature space modeled on free
2-step distribution of rank $n$, $n\ge 4$, is locally equivalent to $SO(n+1)$ or $SO(1,n)$. In this case the left-invariant distribution on the corresponding Lie algebra is
	\[ \begin{pmatrix}
	0 & \pm v \\
	v & 0
	\end{pmatrix},\quad v\in\R^n \]
with the norm $\| v \|^2 = \sum_{i=1}^n v_i^2$.

For free 2-step distributions of rank $3$ there exists a 2-dimensional family of constant curvature models.
\end{thm}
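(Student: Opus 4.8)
The plan is to follow the strategy laid out just before the theorem: first compute $\Inv_{\g_0}H^2(\g_-,\g)$, and then match each invariant element with an explicit homogeneous model. Since the harmonic curvature completely determines the Cartan connection, the injective map to $\Inv_{G_0}H^2(\g_-,\g)$ reduces the classification to a representation-theoretic computation followed by a model-building step. I would split according to the two parts of $H^2_{>0}(\g_-,\g)$ supplied by the main decomposition theorem: the $H^1$-part and the $H^2$-part, described explicitly in Theorems~\ref{T:freeH1} and~\ref{T:freeH2}.

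\medskip

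First I would analyze the $G_0=SO(m,\R)$-invariants in each piece, where $m=n$ is the rank of the distribution. For the $H^1$-part, the homogeneity-one generators $\alpha^k_{(ij)}$ are symmetric and traceless in $(i,j)$, so they live in the $SO(n)$-module $S^2_0(\R^n)\ot\R^n$ (with an extra index $k$); the homogeneity-two generators $\alpha_{(pq)}$ sit in $S^2(\R^n)$. I would compute $\Inv_{SO(n)}$ of these modules using the fact that for $SO(n)$ with $n\ge 3$ the only invariant symmetric $2$-tensor is the metric $\delta_{ij}$, while mixed modules like $S^2_0(\R^n)\ot\R^n$ carry no invariants for $n$ large. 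For the $H^2$-part, Theorem~\ref{T:freeH2} tells us that for $n>3$ it coincides with $H^2(\g_-,\bar\g)$, which is the highest-weight component in $\g_{-1}^*\ot\g_{-2}^*\ot\g_{-2}$; decomposing this $SO(n)$-module (via the $B_l$ or $D_l$ branching data recalled before the theorem) should show it carries no invariants for $n\ge 4$. The upshot I expect is that for $n\ge 4$ the space $\Inv_{\g_0}H^2(\g_-,\g)$ is exactly one-dimensional, sitting inside the homogeneity-two $H^1$-part and detected by the metric $\delta_{pq}$, whereas for $n=3$ the low-rank coincidences produce extra invariants, yielding the claimed $2$-dimensional family.

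\medskip

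Next I would produce the models realizing the unique (up to scale and sign) invariant for $n\ge 4$. The natural candidates are $SO(n+1)$ and $SO(1,n)$ with the left-invariant distribution spanned by the off-diagonal block $\begin{pmatrix}0 & \pm v\\ v & 0\end{pmatrix}$, $v\in\R^n$, and norm $\|v\|^2=\sum_i v_i^2$. I would verify that the bracket of two such elements lands in $\so(n)\op\so(1)$, i.e. generates the full tangent space, confirming that $D+[D,D]=TM$ with the free $2$-step symbol, and that $SO(n)$ acts as the isotropy $G_0$. Then I would check that this homogeneous space is a constant-curvature Cartan geometry whose harmonic curvature is a nonzero $\g_0$-invariant element, hence realizes the generator of $\Inv_{\g_0}H^2(\g_-,\g)$; the sign of the curvature distinguishes the compact form $SO(n+1)$ from the noncompact $SO(1,n)$. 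By the injectivity of the classification map, these exhaust all non-flat models for $n\ge 4$.

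\medskip

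The main obstacle I anticipate is the $SO(n)$-module decomposition of the $H^2$-part for general $n\ge 4$: one must confirm that the highest-weight component in $\g_{-1}^*\ot\g_{-2}^*\ot\g_{-2}$, regarded as an $\so(n)$-representation via the fundamental and exterior-power identifications recalled in the paragraph preceding the theorem, contains no trivial summand. This requires care because $\g_{-2}\cong\Lambda^2\R^n$ and the branching behaves differently in the $B_l$ versus $D_l$ cases, and because the low-rank exceptional isomorphisms are precisely what break down at $n=3$. A related subtlety is verifying that the candidate Lie-theoretic models indeed carry the \emph{normalized} (co-closed) curvature rather than merely a constant one; I would handle this by appealing to the uniqueness in the first Theorem of Section~\ref{S:normCartConn}, since an invariant harmonic curvature determines the connection uniquely and the homogeneous geometry automatically has $\g_0$-invariant $\kappa_H$. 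For $n=3$ I would separately enumerate the two independent invariants and indicate the corresponding $2$-parameter family, leaving its finer description to a remark.
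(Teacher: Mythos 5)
Your proposal follows essentially the same route as the paper: it reduces the classification to computing $\Inv_{G_0}H^2(\g_-,\g)$ piece by piece (the $H^2$-part and the two homogeneities of the $H^1$-part), finds the unique invariant $\sum_p\alpha_{(pp)}$ in the homogeneity-two $H^1$-part for $n\ge4$, realizes the two signs by $SO(n+1)$ and $SO(1,n)$, and attributes the extra invariant at $n=3$ to the collapse of the $H^2$-part to a one-dimensional (hence trivial) $\gso(3)$-module. The only detail you do not flag is that the paper treats $n=4$ separately because $\gso_4(\C)\cong\gsl_2(\C)\times\gsl_2(\C)$ changes the branching bookkeeping, but this is a computation within the same strategy, not a different argument.
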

\begin{proof}
We are going to check $\g_{0}$-invariance of three different components of $H^2(\g_-,\g)$. The computations for the generic case are different from cases $n=3$, $n=4$. We consider the generic case first.

\textbf{1.} The complexification of $H^2$-part in $H^2(\g_-,\g)$ is the highest weight $\gsl(n,\C)$-module in $\Hom(\g_{-1}\we\g_{-2},\g_{-2})$. Its weight as $\gsl(n,\C)$-module is
$\pi_{n-1}+\pi_{n-2}+\pi_2$. We need to compute its decomposition as
$\gso(n,\R)$ module. First, we have
\begin{align*}
	R_{\gsl}(\pi_{n-1})R_{\gsl}(\pi_{n-2}) =&
	R_{\gsl}(\pi_{n-1}+\pi_{n-2})
	+R_{\gsl}(\pi_{n-3}),
	\\
	R_{\gso}(\hat\pi_{n-1})R_{\gso}(\hat\pi_{n-2}) =&
	R_{\gso}(\hat\pi_{n-1}+\hat\pi_{n-2})
	+R_{\gso}(\hat\pi_{n-1})+R_{\gso}(\hat\pi_{n-3}).
\end{align*}
Therefore
\[R_{\gsl}(\pi_{n-1}+\pi_{n-2})=R_{\gso}(\hat\pi_{n-1}+\hat\pi_{n-2})
+R_{\gso}(\hat\pi_{n-1}).\]
Next we have
\begin{align*}
	R_\gsl(\pi_{n-1}+\pi_{n-2}+\pi_2) \subset&
	R_{\gsl}(\pi_{n-1}+\pi_{n-2})R_{\gsl}(\pi_2)\\
	=&
	(R_{\gso}(\hat\pi_{n-1}+\hat\pi_{n-2})+
	R_{\gso}(\hat\pi_{n-1}))R_{\gso}(\hat\pi_2)
	\\
	=&
	(R_{\gso}(\hat\pi_{1}+\hat\pi_{2})+
	R_{\gso}(\hat\pi_{1}))R_{\gso}(\hat\pi_2).
\end{align*}
Finally,
\begin{align*}
	R_{\gso}(&\hat\pi_{1})R_{\gso}(\hat\pi_{2}) =
	R_{\gso}(\hat\pi_{1}+\hat\pi_{2})
	+R_{\gso}(\hat\pi_{1})+R_{\gso}(\hat\pi_{3}).
	\\
R_{\gso}(&\hat\pi_{1}+\hat\pi_{2}) 	R_{\gso}(\hat\pi_{2})
	=	R_{\gso}(\hat\pi_{3})+R_{\gso}(\hat\pi_{2}+\hat\pi_{3})
	+	R_{\gso}(\hat\pi_{1}) + 	R_{\gso}(\hat\pi_{1}+\hat\pi_{4})
	\\
	&+ 2	R_{\gso}(\hat\pi_{1}+\hat\pi_{2})
	+ R_{\gso}(2\hat\pi_{1}+\hat\pi_{3})
	+ R_{\gso}(2\hat\pi_{1}+2\hat\pi_{2})
	+	R_{\gso}(3\hat\pi_{1}).
\end{align*}
The last expression shows that the complexification of  $H^2(\g_-,\bar\g)$ doesn't contain
any trivial submodules.

\textbf{2.}  As $\gso(n,\C)$-module the homogeneity 1 component in $H^1(\g_-,\bar{\g}/\g)\ot\C$ is a part of:
\[
S^2(\pi_1)R(\pi_1)=(R(2\pi_1)+R(\pi_0))R(\pi_1)
=R(3\pi_1)+R(\pi_1+\hat{\pi}_2)+ 2R(\pi_1).
\]
As we see this component doesn't contain any trivial submodule.

\textbf{3.} The homogeneity 2 component in $H^1(\g_-,\bar{\g}/\g)$ is a
symmetric 2-tensor and as we know:
\[ S^2_\gso(\pi_1)=R_\gso(2\pi_1)+R_\gso(\pi_0).\]
Therefore the family of constant curvature structures is not more than $1$-dimensional in the generic case.

The corresponding cohomology element invariant under the action of $\gso(n,\R)$ is
\[ \alpha=\sum_{i=1}^n \alpha_{(ii})=\sum_{i,t=1}^n \left( e_t\ot
e_{[ti]}^*\we e_i^*+ \sum_{r=1}^n e_{[tr]}\ot e_{[ti]}^*\we e_{[ir]}*
\right) .\]
One could check that
constant curvature models with curvature $k\alpha$ are
isomorphic to $SO(n+1)$ if $k>0$ and to $SO(1,n)$ if $k<0$.

\textbf{4.} In the case $n=4$ the computations are slightly different
since
 $\gso_4(\C)=\gsl_2(\C)\times\gsl_2(\C)$. The fundamental weights
 are $(i\pi_1,j\pi_1)$. We use the notation
 \begin{align*}
 & R(\hat{\pi}_1) =R( \pi_1,\pi_1),
 \\
 & R(\hat{\pi}_2) = \Lambda^2(\hat{\pi}_1) = R(\pi_0,2\pi_1)+
 R(2\pi_1,\pi_0),
 \\
 & R(\hat{\pi}_3) =\Lambda^3(\hat{\pi}_1) = R( \pi_1,\pi_1 ).
 \end{align*}
 As in previous computations $H^2$-part of the harmonic curvature belongs to
 \begin{align*}	&\left(R(\hat\pi_{1}+\hat\pi_{2})+
 R(\hat\pi_{1})\right)R(\hat\pi_2)
 \\
 &= ( R (\pi_1,3\pi_1) + R (3\pi_1,\pi_1)
 + R (\pi_1,\pi_1) ) (R(\pi_0,2\pi_1)+
 R(2\pi_1,\pi_0))
 \\
 &=R (\pi_1,5\pi_1) + R (5\pi_1,\pi_1) + 2R (3\pi_1,3\pi_1)
 \\
 &+ 3 R
 (\pi_1,3\pi_1) + 3R (3\pi_1,\pi_1) + 4 R (\pi_1,\pi_1)
 \end{align*}
 and doesn't have trivial components.  Homogeneity 1 component in
 $H^1(\g_-,\bar{\g}/\g)$ belongs to
 \[
 S^2(\hat\pi_1)R(\hat\pi_1)=R (3\pi_1,3\pi_1) + R (\pi_1,3\pi_1) + R
 (3\pi_1,\pi_1) + 2R (\pi_1,\pi_1)
 \]
 and doesn't have any trivial components as well. Homogeneity 2 component in
 $H^1(\g_-,\bar{\g}/\g)$ has 1-dimensional trivial submodule
 \[ S^2(\hat\pi_1)=R(2\hat\pi_1)+R(\hat\pi_0).\] Explicit check shows
 that again we obtain unique up to scaling constant curvature models
 on $SO(5)$ and $SO(4,1)$.

\textbf{5.} Consider the case $n=3$. In this case the complexification of $\g_0=\gso(3,\R)$ is $\gsl(2,\C)$. As $\gsl(2,\C)$-module the complexification of $\g_{-1}$ and $\g_{-2}$ are $R(2\pi_1)$.

First of all, $H^2$-part of the harmonic curvature is $1$-dimensional and therefore is trivial under the action of $\gso(3)$. Secondly, homogeneity 1 component in  $H^1(\g_-,\bar{\g}/\g)\ot\C$ belongs to
 \[
 S^2(2 \pi_1)R(2\pi_1)=R (6\pi_1) + R (4\pi_1) + 2 R
 (2\pi_1)
 \]
 and doesn't have any trivial components. Homogeneity 2 component in
 $H^1(\g_-,\bar{\g}/\g)\ot\C$ has 1-dimensional trivial submodule
 \[ S^2(2 \pi_1)=R(4\pi_1)+R(\pi_0).\]

To sum up the family of constant curvature structures is $2$-dimensional.

\end{proof}

\begin{rem}
One can use Theorems \ref{T:freeH2} and \ref{T:freeH1} in order to write down homogeneous models explicitly for free sub-Riemannian structures of rank 3. The corresponding family is a cone over a circle with the vertex of the cone being a flat model. All models apart 2 lines passing through the vertex of the cone have 9-dimensional semisimple symmetry Lie algebras. The symmetry algebras of models on the remaining 2 lines have a 6-dimensional semisimple part in the Levi decomposition.
\end{rem}

\section{Appendix --- Admissible
 Cartan connections for filtered $G_0$-structures with trivial prolongation}

The goal of this section is to provide a simple and straightforward
construction of canonical Cartan connections under algebraic assumptions
relevant for the situations dealt with in the paper. In particular we show
that such construction is almost identical to the well known theory of
classical G-structures.

Exactly as in Section 2 above, let $D$  be  a bracket  generating  $n$-dimensional distribution  on  an
$m$-dimensional manifold $M$ and $D_{-1} =D \subset D_{-2} \subset \cdots
  \subset D_{-k} = TM$ the associated filtration.  As always, we assume that
  the distribution $D$ has constant symbol, that is for any $x \in M$ the
  negatively graded Lie algebra
$$\operatorname{gr}T_xM = (T_xM)_{-k} + \cdots + (T_xM)_{-1}=
(D_{-k}/ D_{-k+1})_x + \cdots + (D_{-1})_x$$
is isomorphic  to  some  fixed symbol  algebra
$\g_{-} = \g_{-k} + \cdots+ \g_{-1}$.

Further, we fix  a  subalgebra $\g_0 \subset \mathfrak{der}{\g_-}$ of
(graded) derivations, which integrates to a Lie group $G_0 \subset
\operatorname{Aut}(\g_-)$ of automorphisms and we denote by
$$\pi : \mathcal{F} \to M $$
the associated filtered $G_0$-structure  (or  graded $G_0$   structure), i.e.  $G_0$-principal bundle
of  graded  frames $f : \g_- \to \operatorname{gr}T_xM$ (isomorphisms of the
graded Lie algebras) in the graded tangent space $\operatorname{gr}TM $.
Let us assume that the Tanaka prolongation $\g^{(1)}$ of the non positively
graded Lie algebra $\g:=\g_{\leq 0} = \g_- + \g_0$ is trivial.  As
detailed in Section 2, it is  the  case  if  $\mathfrak g_0\subset \mathfrak {so}(\mathfrak
g_{-1})$, and   then there is a unique normal Cartan connection for this filtered $G_0$-structure.

%Here we give an elementary description of (admissible)
%Cartan   connections  in  the  principal
%bundle  $\pi : \mathcal{F} \to  M$  and algebraic  conditions
%(in  terms of the second cohomology  $H^2(\g_-,\g)$) for
%existence of   normal Cartan  connections under simpler conditions.
%We assume that   the linear Lie  algebra
%$\g_0 \subset \mathfrak{gl}(\g_-)$ has  trivial  first  prolongation
%and also (for  simplicity)   that   there is  a $\g_0$-invariant
%complement   to  the   $\g_0$-submodule  $\delta (\g_0 \otimes \g_-^*)$
%in   the   module  of  torsions $Tor := \g_- \otimes \Lambda^2 \g_-^*$.
%
%In fact,  we observe  that  a  small  modification of  the  Cartan
%structure equation for a standard $G$-structure with trivial first
%prolongation, reduces the problem to the algebraic problem about
%cohomology of Lie algebras.
%
To  simplify   the  exposition,  we  will  assume  that  the
depth $k =2$, i.e.  the  symbol algebra
$\g_- = \g_{-2} + \g_{-1}$ is  metaabelian.
Then  a complementary  distribution $D'$ to  $D$, such  that
$TM  = D \oplus D'$,  allows   to  identify  graded frames   with  tangent  frames    and  identify
 the filtered $G_0$-structure  with a  standard  $G_0$-structure.

Recall   that  a $G$-structure on  a principal $G$-bundle   $\pi :
\mathcal{F} \to M$ over $m$-dimensional manifold $M$ is defined by a
soldering 1-form $\theta : T \mathcal{F} \to \mathbb{R}^m$, that is strictly
horizontal ($\mathrm{ker}\theta = T^{vert}\mathcal{F}$) $G$-equivariant
vector-valued 1-form.
%
%In general, a $G_0$-structure   on $\pi : \mathcal{F} \to M$ is defined by a
%system $D^{-i}, \, i=1, \cdots ,k_1 $ of distributions,such $D^{-i}$ is a
%complementary distribution to $D_{-i}$ in $D_{-i-1}$, or , in other words, a
%direct sum decomposition
%$$ TM = D \oplus D^{-1} + D^{-2} + \cdots +
%D^{-k+1}$$
%such that  $ D_{-i} = D \oplus  D^{-1} \oplus \cdots \oplus D^{-i+1}$.
%We  say  that  such  decomposition  is  a gradation  of $TM$
%subordinated  to  the   filtration defined  by  $D$.
%

Note  that for  the filtered $G_0$-structure  there  is  the natural  projection
  $$ T_xM \to (T_xM)_{-2}= T_xM /D_x, \,\,  X \mapsto X_{-2} $$
  which  defines  the   partial $\g_{-2}$-valued  soldering  form $\theta_{-2} : T\mathcal{F} \to \g_{-2}$ on the filtered $G_0$-structure $\mathcal{F}$  defined  by
    $\theta_{-2}(X_f) =  f^{-1} (\pi_*X)_{-2}$.

A $G_0$-structure  in  $\pi : \mathcal{F} \to  M$   defined  by   an
  extension of the partial soldering form $\theta_{-2}$ to a $\g_- = \g_{-2}
  + \g_{-1}$-valued soldering form is called \emph{admissible}.
 A complementary distribution $D'$  defines an isomorphism
$$\mathrm{gr}_{D'}: TM \to \operatorname{gr}TM ,
   X \mapsto \mathrm{gr}(X) = X_{-2} \oplus X_{-1}                                  $$
where $X = X_{-2} + X_{-1} \in T_xM = D' \oplus D$ is the natural decomposition
  of $X $ and we identify $D'$ with $(TM)_{-2} = TM/D$  and   extends graded  frames  to an   admissible   frames.
  We  get

\bl  Any   complementary  distribution $D'$ to $D$  (s.t.
$TM = D \oplus  D'$) defines  an  admissible  $G_0$-structure on  the principal bundle  $\pi : \mathcal{F} \to M$ and   any admissible  $G_0$-structure is  associated  with  some $D'$.
The  soldering  form $\theta = \theta_{D'}$  of   such $G_0$-structure  is  given  by
$$   \theta_f (X) = f^{-1}\mathrm{gr}(\pi_* X) \in \g_-.  $$
     \el

Let us observe that there always is the distinguished choice of $D'$ given
by the co-closeness curvature normalization for the sub-Riemannian
structures, as explained in detail in Section 2.
In particular examples, the identification of $D'$
usually amounts to minimizing traces of individual components of the
curvature. We have illustrated this in the detailed discussion of the
2-step free distributions in the end of Section 5.

\subsection{Construction  of a Cartan  connection}

%A small   modification  of Cartan   structure  equations  gives   a Cartan
%connection of the requested type.
%
Let $\pi : \mathcal{F} \to M$ be a filtered $G_0$-structure.
We  consider   the  $G_0$-structure  on  $M$  defined   by a complementary  distribution  $D'$.
Each  element $f \in \mathcal{F}$ is identified  with  an  admissible  frame $f : \g_- \to
\operatorname{gr}T_xM = T_xM$.
 We  denote  by $f^{-1}: T_xM \to \g_-$
the  dual  coframe.

\medskip
\noindent{\it Algebraic assumptions.}
\begin{enumerate}
\item[i)]
We assume  that the  first prolongation  $\g_0^{(1)} =0$ of  the linear Lie
algebra $\g_0 \subset \mathfrak{gl}(\g_-)$ is trivial.  It is the case if
the first prolongation of $\g= \g_- + \g_0$ is trivial.
\item[ii)] We  assume  that   there  is  a fixed $G_0$-invariant
complement  $\mathcal{T}$  to  the  subspace   $\delta (\g_0 \otimes \g_-^*)$
in the vector  space of  torsions  $\operatorname{Tor} :=
\g_- \otimes \Lambda^2 \g_-^*$ (here $\delta$ means the usual Spencer
differential).
\end{enumerate}

According  to    theory of  $G$-structures, under these assumptions
there is  a unique
connection $\omega : T\mathcal{F} \to \g_0$ on the principal bundle $ \pi :
\mathcal{F} \to M$ whose torsion function $ t : \mathcal{F} \to Tor $ take
values in $\mathcal{D}$.

If $H = \mathrm{ker} \omega_f$ is  the  horizontal  space of  the  connection    and $\pi_H : =(\pi_*)_H :H \to T_xM$ is  the  natural isomorphism
then   the  value $t_f \in Tor$  is  given  by
$$   t_f (u,v) = d \theta_f (\pi_H^{-1} fu,\pi_H^{-1} fv ) \in \g_- $$
for  any  $u,v \in  \g_-$.
In  this  case,  1-form
$$ \varpi = \omega + \theta : T \mathcal{F} \to \g=
\g_0 + \g_- $$
is a Cartan connection modelled on the affine space $ \g_-
=(G_0 \rtimes \g_-)/G_0$ that is a $G_0$-equivariant map without kernel
(such that $ \varpi_f : T_f\mathcal{F} \to \g$ is a vector space
isomorphism) which extends the vertical parallelism $T^{vert}
\mathcal{F} \to \g_0$.

Note   that     we  may  consider the  same  1-form   $ \varpi =   \omega +
\theta $ as a Cartan connection in $\mathcal{F}$ modelled on the homogeneous
manifold $G_- = G/G_0 =(G_0\rtimes G_-)/G_0$ where $ G= G_0\rtimes G_-$ is a
semidirect product of the nilpotent Lie group, generated by the Lie algebra
$\g_-$ and a group $G_0 \subset \mathrm{Aut}(G_-)$ of its automorphisms.
Such Cartan connections $\varpi$ on the filtered $G_0$-structure $\mathcal{F}
\to M$ are called \emph{admissible}.

We may summarize:

 \bp Any  admissible Cartan  connection  is  obtained  in this  way  and it
 is associated with a complementary distribution $D'$.  \ep

 The  curvature
  $$\kappa = d \varpi + \frac12 [\varpi, \varpi]$$
   of  this  connection  is a 2-form on
$\mathcal{F}$ valued in $\g = (\g_- + \g_0)$, which describes   the  deviation  of  the bundle   $\pi : \mathcal{F} \to M$ from  the model  bundle $G \to  G/G_0$.
 Here  $[\phi, \psi]$ denote  the bracket  of $\g$-value  1-forms,  defined as  2-form  by
 $$  [\phi, \psi](X,Y) = [\phi(X), \psi(Y)] - [\psi(Y), \phi(X)] . $$

  The  curvature  form
   is  decomposed into $\g_0$-valued  form $\Omega$  called  the  curvature
   and the $\g_-$-valued part $\Theta$, called torsion.  The general
Cartan structure equations split nicely, too:
$$  \Omega = d \omega  + \frac12 [\omega,  \omega]   $$
 $$    \Theta  = d \theta + \frac12 [\theta, \theta] + [\omega, \theta].$$
 The last  equation   may  be  decomposed  into $\g_{-1}$   and  $\g_{-2}$ -part  as  follows:
 $$ \Theta_{-1} = d \theta_{-1} + [\omega,  \theta_{-1}]                      $$
 $$ \Theta_{-2} = d \theta_{-2} + \frac12 [\theta_{-1}, \theta_{-1}] + [\omega,  \theta_{-2}]                      $$
  Note  that  the  2-forms  $\kappa, \Omega, \Theta$  can   be  considered    as
the $G_0$-invariant  functions
  $$K: \mathcal{F} \to \Lambda^2(\g)^* \otimes \g, \,R= \mathcal{F}\to \Lambda^2(\g)^* \otimes \g_0, \, T:\mathcal{F} \to \Lambda^2(\g)^* \otimes \g_-  $$
  called  Cartan  curvature  function, curvature  function  and  torsion   function respectively.


\begin{thebibliography}{10}

\bibitem{ABR}
Agrachev, A., Barilari, D., Rizzi, L.
{\em Curvature: A variational approach.} Memoirs of the AMS, 120pp, in press.

\bibitem{ABR2}
Agrachev A., Barilari D., Rizzi J. {\em  Sub-Riemannian curvature in contact geometry}.
 Journal of Geometric Analysis,( 2016).

 \bibitem{AB12}
Agrachev, A., Barilari, D.
{\em Sub-Riemannian structures on 3D Lie groups.}
J. Dyn. Control Syst. 18 (2012), no. 1, 21--44.
 

\bibitem{AD15}
Alekseevsky, D.V., David, L.
{\em Tanaka structures (non holonomic G-structures) and Cartan
connections.} Journal of Geometry and Physics 91 (2015), 88--100.

\bibitem{Bar13}
Barilari, D.
{\em Trace heat kernel asymptotics in 3D contact sub-Riemannian geometry.}
J. Math. Sci. (N.Y.) 195 (2013), no. 3, 391--411.

\bibitem{CCh09}
Calin O., Chang, D.
{\em Sub-Riemannian geometry: general theory and examples.} Encyclopedia of Mathematics and Its Applications, vol. 126,
Cambridge University Press, New York, 2009, xi+367 pp.

\bibitem{CCh10}
Calin O., Chang D., Sub-Riemannian geometry. A variational approach, J. Diff. Geom. 80, (2008),  23-48.


\bibitem{Cap17}
\v Cap, A.
{\em  On canonical Cartan connections associated to filtered $G$-structures.} arXiv:1707.05627 [math.DG].

\bibitem{CS09}
\v Cap, A., Slov\'ak, J.
{\em Parabolic Geometries I, Background and General
Theory.} AMS, Math. Surveys and Monographs 154, x+628pp.

\bibitem{CO15}
Cowling, M., Ottazzi, A.
{\em Conformal maps of Carnot groups.}
Ann. Acad. Sci. Fenn. Math. 40 (2015), no. 1, 203--213.

\bibitem{DV10}
Diniz, M., Veloso, J.
{\em $k$-step sub-Riemannian manifold whose sub-Riemannian metric admits a canonical extension to a Riemannian metric.} J. Dyn. Control Syst. 16 (2010), no. 4, 517--538.

\bibitem{DS10} Doubrov, B., Slov\'ak, J.
{\em Inclusions of parabolic geometries.} Pure and Applied Mathematics Quaterly 6, 3 (2010), Special Issue: In honor of Joseph J. Kohn, Part 1,
755--780.

\bibitem{DG}
 Dragovic V.,  Gajic B., {\em The Wagner Curvature Tensor in Nonholonomic Mechanics,} Reg. Chaot. Dyn. 8 (1) (2003),, 105-124

\bibitem{FG95}
Falbel, E., Gorodski, C.
{\em On contact sub-Riemannian symmetric spaces.}
Ann. Sci. \'Ecole Norm. Sup. (4) 28 (1995), no. 5, 571--589.

\bibitem{Hl12}
Hladky, R.
{\em Connections and curvature in sub-Riemannian geometry.} Houston J. Math. 38 (2012), no. 4, 1107--1134.

\bibitem{Lang01}
Langerock, B.
{\em Connections in sub-Riemannian geometry.} Differential geometry and its applications (Opava, 2001), 105--113, Math. Publ., 3, Silesian Univ. Opava, Opava, 2001.

\bibitem{Lang03}
Langerock, B.
{\em A connection theoretic approach to sub-Riemannian geometry.} J. Geom. Phys. 46 (2003), no. 3-4, 203--230.

\bibitem{Mor93}
Morimoto, T.
{\em Geometric structures on filtered manifolds.}
Hokkaido Mathematical Journal, Vol. 22 (1993), 263--347.


\bibitem{Mor08}
Morimoto, T.
{\em Cartan connection associated with a subriemannian
structure.} Differential Geometry and its Applications 26 (2008), 75--78.

\bibitem{Vinb}
Onishchik, A.L., Vinberg, E.B.
{\em Seminar on Lie algebras.}

\bibitem{Tan70}	
Tanaka, N.
{\em On differential systems, graded Lie
algebras and pseudo-groups.} J. Math. Koyto Univ., 10, 1 (1970), 1--82.

\bibitem{VF10} Vershik A.M., Faddeev L.D., Lagrangian mechanics  with  constrains, in
"Problems of  theoretic Physics", LGU, 1975,
129-141 (in Russian).
 


\bibitem{Yam93}
Yamaguchi, K.
{\em Differential Systems Associated with
Simple Graded Lie Algebras.}
Advanced Studies in Pure Mathematics 22, 1993
Progress in Differential Geometry, 413--494.

\bibitem{Yat88}
Yatsui, T.
{\em On pseudo-product graded Lie algebras.} Hokkaido Math. J., 17
(1988), 333--343.

\bibitem{Yat15}
Yatsui, T.
{\em On conformal subriemannian fundamental graded Lie algebras and Cartan connections.}
Lobachevskii J. Math. 36 (2015), no. 2, 169--177.
\end{thebibliography}
\end{document}